\documentclass[11pt,a4paper]{amsart}

\pagenumbering{arabic}
\addtolength{\textheight}{-4mm}
\topmargin6mm

\title[The polynomially convex embedding dimension]{The polynomially convex embedding dimension of real manifolds of dimension $\leq 11$}

\author{Leandro Arosio$^{\dagger}$ \& H{\aa}kan Samuelsson Kalm \& Erlend F.\ Wold$^{*}$}
\address{Leandro Arosio, Dipartimento Di Matematica, Universit\`{a} di Roma ``Tor Vergata'',
Via Della Ricerca Scientifica 1, 00133 Roma, Italy}
\email{arosio@mat.uniroma2.it}
\address{H{\aa}kan Samuelsson Kalm, Department of Mathematical Sciences, Division of Algebra and Geometry, University of Gothenburg and 
Chalmers University of Technology, SE-412 96 G\"{o}teborg, Sweden}
\email{hasam@chalmers.se}
\address{Erlend F. Wold, Department of Mathematics, University of Oslo, PO-BOX 1053,
Blindern, 0316 Oslo, Norway}
\email{erlendfw@math.uio.no}

\date{\today}

\usepackage[english]{babel}
\usepackage{amsmath,calligra}
\usepackage{xcolor}
\usepackage{amsthm,amssymb,latexsym}
\usepackage[all,cmtip]{xy}
\usepackage{url,ae}
\usepackage{t1enc}
\usepackage{bm}
\usepackage{mathrsfs}
\usepackage{graphicx}
\usepackage{geometry}
\geometry{head=8mm,foot=8mm,bindingoffset=12mm,vcentering=true,twoside=true,textheight=222mm,textwidth=141mm}

\newtheorem{proposition}{Proposition}[section]
\newtheorem{theorem}[proposition]{Theorem}
\newtheorem{lemma}[proposition]{Lemma}
\newtheorem{corollary}[proposition]{Corollary}

\theoremstyle{definition}

\newtheorem{remark}[proposition]{Remark}

\numberwithin{equation}{section}

\DeclareMathOperator{\Hom}{\mathscr{H}\text{\kern -3pt {\calligra\Large om}}\,}
\DeclareMathOperator{\Ext}{\mathscr{E}\text{\kern -3pt {\calligra\Large xt}}\,\,}
\DeclareMathOperator{\Image}{\mathscr{I}\text{\kern -3pt {\calligra\Large m}}\,}
\DeclareMathOperator{\Ker}{\mathscr{K}\text{\kern -3pt {\calligra\Large er}}\,}
\newcommand{\C}{\mathbb{C}}
\newcommand{\debar}{\bar{\partial}}

\newcommand{\Sb}{\mathcal{S}}

\newcommand{\R}{\mathbb{R}}

\newcommand{\PM}{\mathscr{P} \kern -3pt \mathscr{M}}

\newcommand{\CH}{\mathscr{C} \kern -2pt \mathscr{H}}

\def\newop#1{\expandafter\def\csname #1\endcsname{\mathop{\rm #1}\nolimits}}
\newop{span}

\begin{document}
\nocite{*}
\bibliographystyle{plain}

\begin{abstract}
We show that any compact smooth real $n$-dimensional manifold $M$ with $n\leq 11$
can be smoothly embedded into $\C^{n+1}$ as a polynomially convex set. The result is optimal, as there is  no such embedding into $\C^n$. This solves a problem by Izzo and Stout
for $n\leq 11$.
Additionally, we show that 
the image $\widetilde{M}$ of $M$ in $\C^{n+1}$ is stratified totally real. 
As a consequence, by a result in \cite{SW}, each 
continuous complex-valued functions on $\widetilde{M}$ is the uniform limit 
on $\widetilde{M}$ of holomorphic polynomials in $\C^{n+1}$.
Our proof is based on the jet transversality theorem and a recent slight improvement
of a perturbation result by the first and the third author.
\end{abstract}

\thanks{${}^\dagger$  Partially supported by   INdAM, by  PRIN {\sl  Real and Complex Manifolds: Geometry and Holomorphic Dynamics} n. 2022AP8HZ9, and by the MUR Excellence Department Project MatMod@TOV
		CUP:E83C23000330006.}

\thanks{${}^*$ Supported  by the project “Pure Mathematics in Norway", funded by the TMS foundation.}

\maketitle
\thispagestyle{empty}

\section{Introduction}

Let $M$ be a compact real $n$-dimensional $C^\infty$-manifold  without boundary. 
The following natural question was asked by Izzo and Stout 
\cite[Question~4]{IS}.
\emph{What is the smallest integer $N$ such that any $M$ as above can be
smoothly embedded into $\C^N$ as a polynomially convex subset?}
This integer $N$ is the polynomially convex (smooth) 
embedding dimension of $n$-dimensional compact manifolds. 
The main result in this paper is that $N=n+1$ if $n\leq 11$. 
If $n\geq 12$ our techniques give $N\leq \lfloor 5n/4\rfloor-1$.

Before our work, the polynomially convex smooth embedding dimension was known to be
$n+1$ as long as $n\leq 5$. Indeed,
for homological reasons no compact $n$-dimensional topological submanifold of $\C^n$ can be polynomially convex, 
  see e.g. \cite[Corollary 2.3.5]{stout}.
The upper bound $N\leq \lfloor 3n/2\rfloor$ has 
been known since the work by Forstneri{\v c}--Rosay \cite{FR}, and 
Forstneri{\v c} \cite{Fo}; it implies that $N=n+1$ if $n\leq 3$.
Gupta--Shafikov \cite{GS1, GS2} improved the 
Forstneri{\v c}--Rosay bound and showed that $N\leq \lfloor 3n/2\rfloor-1$,
which implies that $N=n+1$ as long as $n\leq 5$.
If the embedding is only required to be topological, then the 
corresponding (topological) embedding dimension is $n+1$, see 
Vodovoz--Zaidenberg, \cite{VZ}.

\smallskip

Recall that a compact set $K\subset\C^m$ is polynomially convex if for each
$z\in\C^m\setminus K$ there is a $p\in\C[z_1,\ldots,z_m]$ such that 
$|p(z)|> \sup_K |p|$.
If $m=1$, then $K$ is polynomially convex if and only if $\C\setminus K$ is
connected, whereas in higher dimensions polynomial convexity is not a topological property.
By the Oka--Weil theorem, any function holomorphic in a neighborhood 
of a polynomially convex compact set $K\subset\C^m$ 
can be approximated uniformly on $K$ arbitrarily well
by holomorphic polynomials. This is one reason for the importance of polynomially 
convex embeddings $M\to\C^m$. 

Recall that a submanifold $\widetilde M\subset\C^m$ is totally real if for each 
$z\in\widetilde M$
the tangent space $T_z\widetilde M$ is totally real, that is, $T_z\widetilde M$
does not contain a complex subspace of positive dimension.
If $\widetilde M$ is totally real and polynomially convex, then by
classical work of H\"{o}rmander--Wermer and Nirenberg--Wells,
$\C[z_1,\ldots,z_{m}]$ is dense in  the uniform algebra $C(\widetilde M,\C)$ of 
continuous complex-valued  functions on $\widetilde M$.

If $f\colon M\to\C^{n+1}$ is an embedding, then $f(M)$ cannot  
be totally real in general; 
the dimension of the ambient space is too small. Let $\text{CRsing}\, f$
be the set of $x\in M$ such that $T_{f(x)}f(M)$ is not totally real.
Here is our main result.

\begin{theorem}\label{main1}
Let $M$ be a compact real $C^\infty$-smooth 
manifold of dimension $n\leq 11$. There is a smooth
embedding
$f\colon M\to \C^{n+1}$ with the following properties.
\begin{itemize}
\item[(a)] $f(M)$ is polynomially convex.
\item[(b)] $\text{CRsing}\, f$ is either empty or a closed real $C^\infty$-smooth submanifold 
of $M$ of codimension $4$ and $f(\text{CRsing}\, f)$
is totally real and polynomially convex. 
\end{itemize}
\end{theorem}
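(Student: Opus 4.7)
My plan is to separate the argument into a jet transversality step that prescribes the CR structure of a smooth embedding, followed by a perturbation step that delivers polynomial convexity.

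I would start from any smooth embedding $f_0 \colon M \hookrightarrow \R^{2n+1} \subset \C^{n+1}$, available by the Whitney theorem, and analyse its Gauss map $G f_0 \colon M \to \mathrm{Gr}_\R(n, 2n+2)$. Let $\Sigma_k$ be the locus of real $n$-planes $V \subset \C^{n+1}$ with $\dim_\R(V \cap i V) \ge 2k$. Parametrising pairs $(L,V)$ with $L$ a complex $k$-plane contained in $V$ and comparing dimensions shows that $\Sigma_k$ is semi-algebraic of real codimension $2k(k+1)$; so $\Sigma_1$ has codimension $4$ and $\Sigma_2$ codimension $12$. Since $n \le 11 < 12$, jet transversality produces a $C^\infty$-small perturbation $f_1$ of $f_0$ whose Gauss map avoids $\Sigma_2$ and meets $\Sigma_1 \setminus \Sigma_2$ transversally, so that $\mathrm{CRsing}\,f_1$ is a closed smooth $(n-4)$-submanifold of $M$ on which the CR dimension equals $1$. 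The analogous count applied to $f_1|_{\mathrm{CRsing}\,f_1}$ shows that the non-totally-real locus for real $(n-4)$-planes in $\C^{n+1}$ has codimension $2(n+1-(n-4)+1) = 12$, which exceeds $n-4$ for $n \le 11$; one more generic perturbation makes $f_1(\mathrm{CRsing}\,f_1)$ a totally real submanifold of $\C^{n+1}$. At this stage $f_1(M)$ is already stratified totally real and the CR-theoretic part of (b) is in place.

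To complete (a) and the polynomial convexity in (b), I would invoke the \emph{slight improvement} of the Arosio--Wold perturbation theorem alluded to in the abstract: given a smooth embedding of $M$ with the transverse CR-singular structure constructed above, it furnishes an arbitrarily $C^\infty$-small perturbation $f$ with $f(M)$ polynomially convex and with the $1$-jet data along the CR-singular locus preserved, so that $\mathrm{CRsing}\,f$ remains an $(n-4)$-submanifold with totally real image. Combined with the classical H\"{o}rmander--Wermer--Nirenberg--Wells theory for totally real submanifolds, the same device arranges polynomial convexity of $f(\mathrm{CRsing}\,f)$ as well. The main obstacle is precisely this last step: transversality is local and inexpensive, but polynomial convexity is a genuinely global, non-generic property, so the perturbation producing it must be tailored not to destroy the $(n-4)$-dimensional CR-singular skeleton and its totally real image --- which is the content of the strengthened Arosio--Wold result.
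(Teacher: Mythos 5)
Your overall architecture --- first-order jet transversality to make $\mathrm{CRsing}\,f$ a smooth $(n-4)$-manifold, a further perturbation to make its image totally real, then a perturbation theorem of Arosio--Wold type to achieve polynomial convexity --- is the same as the paper's. The codimension counts ($2k(k+1)$ for the strata $\Sb_k$, and $12$ for the locus of real $(n-4)$-planes containing a complex line) are also correct. The genuine gap is the step you dispatch with ``one more generic perturbation makes $f_1(\mathrm{CRsing}\,f_1)$ totally real,'' which is precisely the main technical content of the paper. The difficulty is that $\mathrm{CRsing}\,f$ is not a fixed submanifold on which you may freely perturb $f$: it is cut out by first-order conditions on $f$, so any perturbation of $f$ moves the locus itself, and the tangent planes $T_{f(x)}f(\mathrm{CRsing}\,f)$ of the image of the \emph{new} locus are determined by the \emph{second} derivatives of the perturbed map (via $Df_x\bigl((Dj^1f)_x^{-1}T\Sb_1'\bigr)$). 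You therefore cannot apply the transversality theorem to $f_1|_{\mathrm{CRsing}\,f_1}$ as though it were an arbitrary map from a fixed $(n-4)$-manifold. The correct framework is the second jet space: one must exhibit a set $\Sigma''\subset J^2(M,\C^{n+1})$ whose avoidance by $j^2f$ is equivalent to total reality of $f(\mathrm{CRsing}\,f)$, prove that $\Sigma''$ is a submanifold of codimension $12$ --- which amounts to showing that a certain map from (an open subset of) the preimage of $\Sb_1'$ in $J^2$ to the flag bundle $F_{\Sb}$ is a submersion, i.e., that varying the second-order data of $f$ moves the candidate tangent plane of the CR-singular image freely in $G_\R(n-4,P)$ --- and only then invoke jet transversality. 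Your dimension count $2(n+1-(n-4)+1)=12>n-4$ is the right heuristic, but without the $J^2$ construction it does not yield a proof; and your closing remark that ``transversality is local and inexpensive'' misplaces the difficulty, since the polynomial convexity step is quoted from the companion paper while the transversality step here is the one that requires real work.

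On the final step, your description is close to the paper's but too vague on two points that matter. First, the order of operations: one must apply the perturbation theorem \emph{twice}, first with $X=f(\mathrm{CRsing}\,f)$ and $K=\emptyset$ to make the skeleton polynomially convex, and only then with $X=\tilde f(M)$ and $K=\tilde f(\mathrm{CRsing}\,\tilde f)$, using that the theorem fixes the (now polynomially convex) compact $K$ pointwise. Second, the reason $\mathrm{CRsing}$ is preserved is not merely that the perturbation is $C^\infty$-small: one uses that the differential of the perturbing diffeomorphism is $\C$-linear at every point of $X$, which gives the inclusion $\mathrm{CRsing}\,\psi\circ f\supset\mathrm{CRsing}\,f$, combined with a stability-of-transversality argument for the reverse identification. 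The reference to H\"{o}rmander--Wermer--Nirenberg--Wells is not what produces polynomial convexity; that theory gives approximation \emph{once} polynomial convexity and total reality are in hand.
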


Notice that this theorem implies that $f(\text{CRsing}\, f)\subset f(M)$ is a stratification of 
the polynomially convex set $f(M)$ such that $f(\text{CRsing}\, f)$ and 
$f(M)\setminus f(\text{CRsing}\, f)$ are totally real. This means that 
$f(M)$ is a polynomially convex 
\emph{stratified totally real set} in the sense of \cite{SW}.
It then follows from \cite[Theorem~4.5]{SW} that $\C[z_1,\ldots,z_{n+1}]$ is dense in 
$C(f(M),\C)$. By pre-composing the coordinate functions $z_j$ by $f$ we get

\begin{corollary}\label{main2}
Let $M$ be a compact real $C^\infty$-smooth manifold of dimension $n\leq 11$. 
The uniform algebra $C(M,\C)$ of continuous complex-valued functions on $M$
is generated by $n+1$ smooth functions. 
\end{corollary}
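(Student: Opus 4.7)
The plan is to deduce the corollary directly from Theorem~\ref{main1} together with \cite[Theorem~4.5]{SW}; the only work is to transfer the polynomial approximation statement on $f(M)\subset\C^{n+1}$ back to $M$ via the embedding.

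First I would apply Theorem~\ref{main1} to obtain a smooth embedding $f\colon M\to\C^{n+1}$ such that $f(M)$ is polynomially convex and such that the stratification $f(\mathrm{CRsing}\, f)\subset f(M)$ exhibits $f(M)$ as a polynomially convex stratified totally real set in the sense of \cite{SW}. By \cite[Theorem~4.5]{SW} the algebra of holomorphic polynomials $\C[z_1,\ldots,z_{n+1}]$ is then dense in the uniform algebra $C(f(M),\C)$.

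Next, since $f$ is a homeomorphism of $M$ onto $f(M)$, the pullback $\phi\mapsto\phi\circ f$ is an isometric $\C$-algebra isomorphism from $C(f(M),\C)$ onto $C(M,\C)$. Setting $f_j:=z_j\circ f$ for $j=1,\ldots,n+1$, this isomorphism sends a polynomial $p(z_1,\ldots,z_{n+1})$ to the smooth function $p(f_1,\ldots,f_{n+1})$ on $M$. Density therefore transfers, so any continuous complex-valued function on $M$ is a uniform limit of polynomials in the $n+1$ smooth functions $f_1,\ldots,f_{n+1}$, which is precisely the statement of Corollary~\ref{main2}.

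The ``hard part'' of course lies entirely in Theorem~\ref{main1}, which supplies the embedding together with its delicate CR-singular stratum of dimension $n-4$; once that is in hand the corollary requires no further analytic input beyond the cited result of \cite{SW} and the formal change of variables through $f$.
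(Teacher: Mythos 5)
Your proposal is correct and follows exactly the paper's route: Theorem~\ref{main1} exhibits $f(M)$ as a polynomially convex stratified totally real set, \cite[Theorem~4.5]{SW} gives density of $\C[z_1,\ldots,z_{n+1}]$ in $C(f(M),\C)$, and pulling back through the homeomorphism $f$ (i.e.\ pre-composing the coordinates $z_j$ with $f$) yields the $n+1$ smooth generators $f_j=z_j\circ f$. Nothing is missing.
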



The previous result is optimal,  indeed there cannot be $n$ continuous generators $f_1,\dots,f_n$ of $C(M,\C)$, otherwise the map $z\mapsto (f_1(z),\dots, f_n(z))$  would be  a polynomially convex topological embedding in $\C^n$, which,  as discussed above, cannot exist.
It is an open question whether Corollary~\ref{main2}
holds for all dimensions $n\geq 1$. 
Our method of proof of Theorem~\ref{main1} is based on the jet transversality theorem
and Theorem~\ref{trocadero} below. Theorem~\ref{trocadero} is a slight generalization
of the main result in \cite{AW} and is proved in \cite{ASKW}. 
For general $n$,
with only natural minor modifications, our 
proof of Theorem~\ref{main1} gives

\begin{theorem}\label{main3}
Let $M$ be a compact real $C^\infty$-smooth manifold of dimension $n\geq 12$. If
$m=\lfloor 5n/4\rfloor -1$, then there is a smooth
embedding
$f\colon M\to \C^{m}$ with the following properties.
\begin{itemize}
\item[(a)] $f(M)$ is polynomially convex.
\item[(b)] $\text{CRsing}\, f$ is a closed real $C^\infty$-smooth submanifold of $M$ 
of codimension $2(m-n+1)$
and $f(\text{CRsing}\, f)$
is totally real and polynomially convex. 
\end{itemize}
The uniform algebra $C(M,\C)$ of continuous complex-valued functions on $M$
is generated by $m$ smooth functions.
\end{theorem}

\smallskip
The same result with $m=\lfloor 5n/4\rfloor $ has been  obtained by Gupta and Shafikov \cite{GS3}, see Remark \ref{politics} below.

Let us comment on the restriction $n\leq 11$ in Theorem~\ref{main1}
and put it into context of previous works. 
By transversality considerations, cf.\ \cite{FR, Fo},
if $n\leq 3$, then there is an embedding $f\colon M\to\C^{n+1}$ such that
$\text{CRsing}\, f=\emptyset$. After a slight perturbation of $f$, using well known techniques one can then achieve that $f(M)$ is polynomially convex.
Similar transversality considerations show that
there is an embedding $f\colon M\to\C^{n+1}$ such that
$\text{dim}\, \text{CRsing}\, f=0$ and $\text{dim}\, \text{CRsing}\, f=1$ if
$n=4$ and $n=5$, respectively. In these cases it is not immediate how to perturb $f$
to get $f(M)$ to be polynomially convex. The cases
$\text{dim}\, \text{CRsing}\, f=0$ and $\text{dim}\, \text{CRsing}\, f=1$ are handled by Gupta--Shafikov, \cite{GS1, GS2}. Notice that in these cases, 
$f(\text{CRsing}\, f)$ is automatically totally real.
If $n\geq 6$ and $f\colon M\to\C^{n+1}$ is an embedding, then 
$f(\text{CRsing}\, f)$ may not be totally real. The technical part of this paper is to 
show that one in fact can assume that $f(\text{CRsing}\, f)$ is totally real
if $n\leq 11$, possibly after a small perturbation of $f$.
If $n\geq 12$ and $f\colon M\to\C^{n+1}$ is an embedding, 
then $\text{CRsing}\, f$ is not a manifold in general. It will however 
have a stratification into manifold pieces. We see no immediate conceptual obstruction
to our idea of perturbing $f$ so that the pieces become totally real and then 
use (a suitable version of) Theorem~\ref{trocadero} 
to make a further perturbation of $f$ so that $f(M)$ 
becomes polynomially convex. However, considerable technical problems will arise
and we do not know how to handle them at the moment.


\smallskip

The outline of the paper, as well as of the proof of Theorem~\ref{main1}, is as follows.
After some preliminaries in Section~\ref{prelsec}, 
in Section~\ref{crsingsmooth} we show using the jet transversality theorem
that any smooth embedding $f\colon M\to\C^{n+1}$ can be slightly perturbed to
a smooth embedding such that its set of CR-singular points
becomes  a smooth manifold
of dimension $n-4$ if $n\leq 11$. Most parts of this section are well-known to experts,
but for future reference we supply some details.
In Section~\ref{crsingtotreal} we show that a smooth embedding as above can be 
perturbed so that the set of CR-singular points
becomes  a smooth totally real manifold. To do this we consider a certain subset of
the second jet space $J^2(M,\C^{n+1})$ and use the jet transversality theorem.
In Section~\ref{pfthm} we conclude the proof of Theorem~\ref{main1}
by using Theorem~\ref{trocadero} to perturb $f$ further so that $f(M)$ 
becomes polynomially convex. 
In Section~\ref{extra} we indicate how the proof of Theorem~\ref{main1} 
is modified to a proof of Theorem~\ref{main3}.

\begin{remark}\label{politics}
Some days after the appearence on March 2025 of the first version of this work  on arXiv, containing Theorem \ref{main1} and Corollary \ref{main2},  a related preprint \cite{GS3} 
by Gupta and Shafikov was posted on arXiv, independently establishing 
the bound $N\leq \lfloor 5n/4\rfloor$ on the polynomially convex embedding dimension with different methods.
A second version of our work, containing  Theorem~\ref{main3} with the bound $N\leq \lfloor 5n/4\rfloor-1$ was posted on arXiv on October 2025. The  proof  of Theorem~\ref{main3} is a straightforward generalization of our proof of   Theorem~\ref{main1} using our methods.
\end{remark}

\smallskip

\noindent {\bf Acknowledgment:} We are grateful for 
the hospitality and support of Universit\`{a} di Roma ``Tor Vergata'', 
where parts of this work was done during visits of the second and the third author.

\section{Preliminaries and notation}\label{prelsec}
Let $\mathbb{K}$ be either $\R$ or $\C$ and let $\mathbb{K}^{d\times k}$ be the space of $d\times k$-matrices with
entries in $\mathbb{K}$. If $A\in\mathbb{K}^{d\times k}$ we let 
$$
\text{span}_\mathbb{K}\, A\subset \mathbb{K}^d
$$ 
be the column space of $A$. 
Notice that if $B$ is an invertible $k\times k$-matrix,
then
\begin{equation}\label{ABA}
\text{span}_\mathbb{K}\, AB = \text{span}_\mathbb{K}\, A.
\end{equation}
If $\mathbb{K}=\C$, then
$$
\text{span}_\R\, A\subset \mathbb{C}^d
$$ 
is the $\R$-subspace generated by the columns of $A$ over $\R$.
Coordinate vectors in $\mathbb{K}^d$ are always columns in this paper unless 
explicitly stated otherwise. We let $I_{k}$ be the identity $k\times k$-matrix.

If $V$ is a vector space over $\mathbb{K}$, let $G(k,V)$ be the Grassmannian of $k$-dimensional $\mathbb{K}$-subspaces of $V$. If 
$V$ is a vector space over $\C$, then it is also a vector space over $\R$ and we 
let $G_\mathbb{R}(k,V)$ be the Grassmannian of real $k$-dimensional subspaces
of $V$. 

Suppose that $\text{dim}_{\mathbb{K}}V=d$. If we choose a basis for $V$ we can identify $V$ with $\mathbb{K}^d$ and get 
local coordinates on $G(k,V)$ centered at $P\in G(k,V)$ as follows.
After the identification $V\simeq \mathbb{K}^d$,
take $A\in\mathbb{K}^{d\times k}$ such that
$P=\text{span}_\mathbb{K}\, A$. 
By renumbering the standard basis vectors in
$\mathbb{K}^d$ one can assume that the top $k\times k$-submatrix $B$ of $A$
is invertible. In view of \eqref{ABA} one can thus assume that the top
$k\times k$-submatrix of $A$ is $I_k$. A representation $P=\text{span}_\mathbb{K}\, A$
with $A=[I_k \,\, a]^t$ is unique and it follows that 
\begin{equation}\label{stad}
\mathbb{K}^{(d-k)\times k}\ni x\mapsto 
\text{span}_\mathbb{K}\, 
\begin{bmatrix}
I_k \\
a+x
\end{bmatrix}
=\text{span}_\mathbb{K}\, 
\begin{bmatrix}
I_k & 0 \\
a & I_{d-k}
\end{bmatrix}
\begin{bmatrix}
I_k \\
x
\end{bmatrix}
\end{equation}
is a local chart centered at $P$. 
In particular it follows that 
\begin{equation}\label{halt}
\text{dim}_\mathbb{K}\, G(k,V) = k(d-k).
\end{equation}

\smallskip

If $X$ and $Y$ are $C^\infty$-manifolds, let $J^k(X,Y)$ be the $k$th jet space of
$C^\infty$-mappings $X\to Y$. Recall that the points in $J^k(X,Y)$ are equivalence 
classes of smooth mappings $X\to Y$. If $x\in X$ and $f_1, f_2\colon X\to Y$ are smooth,
then $f_1\sim_{x,k} f_2$ if $f_1(x)=f_2(x)$ and the Taylor expansions of $f_1$ and $f_2$
at $x$ agree to order $k$ (with respect to some, and hence any local coordinates
in $X$ and $Y$ centered at $x$ and $f_j(x)$, respectively).
A point $p\in J^k(X,Y)$  then is $[f]_{x,k}$ for some unique $x\in X$; we let $s(p)=x$ and 
$t(p)=f(x)$ be the source and target, respectively, of $p$. If $k=0$, then $s$ and $t$
are the natural projections from $J^0(X,Y)=X\times Y$ to the first and second factor, respectively. There are natural submersions $J^{k+1}(X,Y)\to J^k(X,Y)$ for $k\geq 0$.

We only need to consider $J^1(X,Y)$ and $J^2(X,Y)$
in this paper.  We have
$$
J^1(X,Y)=\{(x,y,\tau); \, x\in X, y\in Y, \tau\in\text{Hom}(T_xX,T_yY)\}.
$$
Let  $f\colon X\to Y$ be a smooth mapping. Then there is an induced mapping
$$
j^1f\colon X\to J^1(X,Y),\quad x\mapsto j^1f(x):=[f]_{x,1}=(x,f(x),Df_x).
$$
On the other hand, if $p=[g]_{x,1}\in J^1(X,Y)$, then the differential 
$Dg_x$ of $g$ at $x$ is well-defined and we denote it by $D_p$.
Associated with $f$ we also have 
$$
j^2f\colon X\to J^2(X,Y),\quad x\mapsto j^2f(x):=[f]_{x,2}.
$$
If $p=[g]_{x,2}\in J^2(X,Y)$, then the differential of $j^1g$ is well-defined at $x$; 
we denote it by
$H_p$ and notice that it is a mapping 
\begin{equation}\label{Hp}
H_p\colon T_{s(p)}X\to T_{\big(s(p),t(p),D_p\big)}J^1(X,Y).
\end{equation}

\section{Making $\text{CRsing}\, f$ smooth}\label{crsingsmooth}
Let $\Sb_j\subset G_\R(n,\C^{n+1})$ be the set of 
real $n$-planes in $\C^{n+1}$ containing a complex $j$-plane but no complex
$j+1$-plane. Notice that
$$
\Sb:=\Sb_1\cup\Sb_2\cup\cdots\cup\Sb_{\lfloor n/2\rfloor}
$$ 
consists of those real $n$-planes that contain a complex line.

\begin{proposition}\label{antarctica}
The set $\Sb$ is a real-analytic closed connected subset of $G_\R(n,\C^{n+1})$
and $\Sb_j$ is a submanifold of $G_\R(n,\C^{n+1})$ of (real) codimension $2j(1+j)$.
Moreover, 
$\overline{\Sb_j}\setminus \Sb_{j}=\Sb_{j+1}\cup\cdots\cup \Sb_{\lfloor n/2\rfloor}$.
\end{proposition}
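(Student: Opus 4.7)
The plan is to use the reformulation $\Sb_j = \{P \in G_\R(n,\C^{n+1}) : \dim_\C(P\cap iP) = j\}$, which holds because $P\cap iP$ is automatically $i$-invariant (if $v\in P\cap iP$ then $iv \in iP\cap (-P) = iP\cap P$) and hence is the maximal complex subspace of $P$. Locally, represent $P = \text{span}_\R A$ with $A \in \C^{(n+1)\times n}$ having $\R$-linearly independent columns; then $P+iP = \text{span}_\C A$, so $P\in \Sb_j$ iff $\text{rank}_\C A = n-j$, while $P\in \Sb$ iff every $n\times n$ complex minor of $A$ vanishes. This polynomial description, together with the invariance under the $GL_n(\R)$-action by which one passes to $G_\R(n,\C^{n+1})$, shows that $\Sb$ is closed and real-analytic: in the local charts of \eqref{stad} (viewing $\C^{n+1}$ as $\R^{2n+2}$) it is the common zero set of real-analytic functions.

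For the submanifold and codimension statement I would use the smooth submersion $\pi \colon U \to G_\R(n,\C^{n+1})$, $A\mapsto \text{span}_\R A$, where $U\subset \C^{(n+1)\times n}$ is the open set of matrices of maximal real rank. The classical fact that matrices of fixed complex rank $n-j$ in $\C^{(n+1)\times n}$ form a complex submanifold of complex codimension $j(j+1)$ gives $V_j := \{A\in U : \text{rank}_\C A = n-j\}$ as a real submanifold of real codimension $2j(j+1)$. Since $V_j$ is saturated by the right $GL_n(\R)$-action and $\pi$ is a submersion, $\Sb_j = \pi(V_j)$ inherits the same real codimension $2j(j+1)$, and hence $\dim_\R \Sb_j = n(n+2) - 2j(j+1)$.

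For the closure relations, the inclusion $\overline{\Sb_j} \subset \bigcup_{k\geq j}\Sb_k$ is upper semi-continuity of the complex rank. The reverse reduces, by $U(n+1)$-equivariance and iteration $\overline{\Sb_{k+1}}\subset\overline{\overline{\Sb_k}}=\overline{\Sb_k}$, to showing that the canonical model $P_{j+1} := \C e_1 \oplus\cdots\oplus \C e_{j+1}\oplus \R e_{j+2}\oplus\cdots\oplus \R e_n$ lies in $\overline{\Sb_j}$. A direct computation shows that perturbing the real line $\R(ie_{j+1})$ inside $P_{j+1}$ to $\R(ie_{j+1} + t e_{n+1})$ yields, for $t\neq 0$, a real $n$-plane whose intersection with its $i$-image is exactly $\C e_1 \oplus\cdots\oplus \C e_j$, so it lies in $\Sb_j$ and tends to $P_{j+1}$ as $t\to 0$.

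Finally, each $\Sb_j$ is a single orbit of the connected group $U(n+1)$ --- the unitary group acts transitively on complex $j$-planes $P\cap iP$, and the stabilizer then acts transitively on totally real $(n-2j)$-planes in the complex orthogonal complement --- hence each $\Sb_j$ is connected. The closure relations then give $\Sb \subset \overline{\Sb_1}$, which is connected as the closure of a connected set. The main technical hurdle is the codimension step: one must check carefully that the determinantal stratification of $\C^{(n+1)\times n}$ descends cleanly through the real quotient $\pi$, in particular that $V_j$ meets the fibers of $\pi$ transversally so that the real codimension $2j(j+1)$ is preserved on the Grassmannian.
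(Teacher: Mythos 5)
Your route to the submanifold and codimension statement is genuinely different from the paper's and is valid. The paper parametrizes $\Sb_j\cup\cdots\cup\Sb_{\lfloor n/2\rfloor}$ by a bundle $\Gamma_j\to G(j,\C^{n+1})$ whose fiber over a complex $j$-plane $\pi$ is $G_\R(n-2j,\pi^\perp)$, and proves by an explicit computation in charts that the natural map $g_j(\pi,\Pi)=\pi\oplus_\R\Pi$ is locally an injective immersion near $\Sb_j$; the codimension then comes from $\dim\Gamma_j$. You instead pull everything back to the frame space: $\Sb_j=\pi(V_j)$ where $V_j$ is the intersection of the complex rank-$(n-j)$ determinantal stratum with the open set $U$ of real-rank-$n$ matrices. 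Since the complex rank is invariant under the right $GL_n(\R)$-action, $V_j=\pi^{-1}(\Sb_j)$ is saturated, and the descent you worry about at the end is in fact the easy, standard step: for a saturated subset of the total space of a submersion, being an embedded submanifold of codimension $c$ upstairs is equivalent to the same statement downstairs (use a local section $s$ of $\pi$; saturation gives $T V_j\supset\ker d\pi$, hence $s\pitchfork V_j$ and $\Sb_j=s^{-1}(V_j)$ locally). So your approach trades the paper's explicit immersion computation for the classical fact about determinantal strata, which is arguably cleaner. Your description of $\Sb$ as the common zero set of the $n\times n$ complex minors in the charts \eqref{stad} is exactly the paper's argument for real-analyticity and closedness, and your perturbation $\R(ie_{j+1})\rightsquigarrow\R(ie_{j+1}+te_{n+1})$ for the reverse closure inclusion is correct (and supplies a detail the paper's sketch omits), modulo an indexing slip in your canonical model: the totally real part of an $n$-plane containing a complex $(j+1)$-plane has real dimension $n-2j-2$, not $n-j-1$.

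There is, however, one genuine error: $\Sb_j$ is \emph{not} a single $U(n+1)$-orbit when $n-2j\geq 2$. The stabilizer of the maximal complex subspace $\pi$ acts on $\pi^\perp\cong\C^{n+1-j}$ through $U(n+1-j)$, and $U(m)$ is not transitive on totally real $k$-planes in $\C^m$ for $k\geq 2$: the restriction of the symplectic form $\mathrm{Im}\langle\cdot,\cdot\rangle$ (equivalently, the K\"ahler angles) is a unitary invariant, and it distinguishes, say, $\mathrm{span}_\R\{e_1,e_2\}$ from $\mathrm{span}_\R\{e_1,ie_1+\epsilon e_2\}$. This breaks your connectedness argument as written, and also the literal reduction ``by $U(n+1)$-equivariance'' to a single canonical model in the closure step (though there you only need to normalize the complex part $\pi$ and a unit vector orthogonal to $P+iP$, which $U(n+1)$ can do, so that step is easily repaired). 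Connectedness is also easily recovered within your own framework: the rank-$(n-j)$ stratum of $\C^{(n+1)\times n}$ is a connected complex manifold, its intersection $V_j$ with $U$ is the complement of a proper real-analytic subset and hence still connected, and $\Sb_j=\pi(V_j)$ is its continuous image. Alternatively, the paper gets connectedness of $\Sb$ for free as the image of the compact connected manifold $\Gamma_1$ under $g_1$.
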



\begin{proof}[Sketch of proof]
We first check that $\Sb$ is real-analytic. Let $P\in G_\R(n,\C^{n+1})$ and take
$A^\C\in\C^{(n+1)\times n}$ such that $P=\text{span}_\R A^\C$. 
By the usual identification
$\C^{n+1}\simeq\R^{2n+2}$ we get $A\in\R^{(2n+2)\times n}$
such that $P=\text{span}_\R A$. After renumbering the standard basis vectors in
$\R^{2n+2}$ we can assume that $A=[I_n \,\, a]^t$. By \eqref{stad} we get local
coordinates $x\in\R^{(n+2)\times n}$ for $G_\R(n,\C^{n+1})$ centered at $P$.
In terms of the standard $\C$-basis for $\C^{n+1}$ this gives us 
$A_x^\C\in\C^{(n+1)\times n}$ such that $x\mapsto \text{span}_\R A_x^\C$
is a local chart at $P$ and the entries of $A_x^\C$ are linear expressions in $x$. 
Notice that  $\text{span}_\R A_x^\C$ contains a complex line if and only if the columns of 
$A_x^\C$ are linearly dependent over $\C$; cf., e.g., Lemma~4.1 in \cite{SW}. 
Thus, in the local coordinates $x$, $\Sb$
is the common zero set of all complex $n\times n$-minors of $A_x^\C$. 
Since the entries of $A_x^\C$ are linear in $x$ this clearly is a real-analytic 
subset of $\R^{(n+2)\times n}$. 

\smallskip

Now,
let $\Gamma_j\to G(j,\C^{n+1})$ be the fiber bundle whose fiber over a point in 
$G(j,\C^{n+1})$ corresponding to the complex $j$-plane $\pi\subset\mathbb{C}^{n+1}$
is $G_\R(n-2j,\pi^\perp)$. Clearly, $\Gamma_j$ is a compact smooth connected manifold. Using \eqref{halt} and that $\pi^\perp\simeq\C^{n+1-j}$
one checks that
$$
\text{dim}_\R \,\Gamma_j=n^2+2n-2j(1+j).
$$
There is a natural mapping $g_j\colon \Gamma_j\to G_\R(n,\C^{n+1})$ defined as follows.
If $\pi\in G(j,\C^{n+1})$ and $\Pi$ is a real $n-2j$-plane in $\pi^\perp$, then
$$
g_j(\pi,\Pi)=\pi\oplus_\R\Pi.
$$
This mapping turns out to be smooth. Notice
that the image of $g_j$ is $\Sb_j\cup\cdots\cup \Sb_{\lfloor n/2\rfloor}$,
which thus in particular is a closed connected subset of $G_\R(n,\C^{n+1})$. 

\smallskip

We focus on describing $\Sb_1$; the other $\Sb_j$ can be handled similarly. 
Let $P\in\Sb_1$. We will show that there is a neighborhood basis
 $\mathcal{U}_\nu$ of $P$ in $G_\R(n,\C^{n+1})$ such that the restrictions of $g_1$ to
$g_1^{-1}(\mathcal{U}_\nu)$ are injective immersions and $g_1^{-1}(\mathcal{U}_\nu)$
is a neighborhood basis of $g_1^{-1}(P)$.
Since injective immersions locally are embeddings it follows that $\Sb_1$ is a submanifold.
Its codimension then is
$$
\text{dim}_\R\,  G_\R(n,\C^{n+1}) - \text{dim}_\R\,\Gamma_1
=n(n+2)-(n^2+2n-4)=4.
$$

Since $P\in\Sb_1$ there is a unique complex line  $\ell_0$ in $P$. After a unitary change of 
coordinates in $\C^{n+1}$ we can assume that $\ell_0=\C \varepsilon_0$, where
$\varepsilon_0=(1,0\ldots,0)^t$.
Let $A\in\C^{(n+1)\times (n-2)}$ be such that the
columns of $A$ are orthogonal to $\C \varepsilon_0$ and
$$
P=\C \varepsilon_0
+ 
\text{span}_\R\, A.
$$
Notice that $\text{span}_\R\, A$ is uniquely determined.
Since $P\in\Sb_1$, $\text{span}_\R\, A$ is totally real. The columns 
$\varepsilon_1,\ldots,\varepsilon_{n-2}$ of 
$A$ must therefore be linearly independent over $\C$. 
Choose $\varepsilon_{n-1},\varepsilon_n\in\mathbb{C}^{n+1}$ to be orthogonal to
$\C\varepsilon_0$ and such that
$\varepsilon=\{\varepsilon_0,\varepsilon_1,\ldots,\varepsilon_n\}$ is a $\C$-basis for 
$\C^{n+1}$.
For the rest of this proof sketch all coordinate vectors in $\C^{n+1}$ are with
respect to the $\varepsilon$-basis.

\smallskip

Let $\mathcal{U}$ be a neighborhood of $P$ in $G_\R(n,\C^{n+1})$
such that $\mathcal{U}\cap\Sb=\mathcal{U}\cap\Sb_1$.
Possibly shrinking $\mathcal{U}$ we can assume that
if $P'\in\mathcal{U}\cap\Sb$, then the complex line in $P'$ is 
$\ell_w=\C (1,w)^t$ for a unique $w\in\C^{n}$ with $|w|\ll 1$;
recall that we are using the $\varepsilon$-basis.
Notice that all sufficiently small neighborhoods of $P$ have these properties.

The orthogonal complement of $\ell_w$ is the set of $(z_0,\ldots,z_n)^t\in\C^{n+1}$
such that
$$
0=z_0+z^tH\bar{w},
$$
where $z=(z_1,\ldots,z_n)$ and $H$ is the Hermitian $n\times n$-matrix with
$H_{jk}=\langle\varepsilon_j,\varepsilon_k\rangle$,
$1\leq j,k\leq n$.
The orthogonal complement of $\ell_w$ thus is
\begin{equation}\label{jump}
\text{span}_\C\,
\begin{bmatrix}
-(H\bar{w})^t \\
I_{n}
\end{bmatrix}
=
\text{span}_\R\,
\begin{bmatrix}
-(H\bar{w})^t & -i(H\bar{w})^t \\
I_{n} & iI_{n}
\end{bmatrix}.
\end{equation}
Let $\Pi_w\in G_\R(n-2,\ell_w^\perp)$ be the $\R$-span of the first $n-2$ columns of \eqref{jump}; notice that $P=\ell_0\oplus_\R\Pi_0$.
In view of \eqref{stad} we have local coordinates $\xi\in\R^{(n+2)\times (n-2)}$ in  $G_\R(n-2,\ell_w^\perp)$ centered at $\Pi_w$ such that  
$$
\xi\mapsto
\text{span}_\R\,
\begin{bmatrix}
-(H\bar{w})^t & -i(H\bar{w})^t \\
I_{n} & iI_{n}
\end{bmatrix}
\begin{bmatrix}
I_{n-2} \\
\xi
\end{bmatrix}
$$
is a chart at $\Pi_w$.
Then $(w,\xi)$ are local coordinates in $\Gamma_1$ centered at $(\ell_0,\Pi_0)$. The
mapping $g_1$ expressed in these coordinates is
\begin{equation}\label{snorlax}
g_1(w,\xi)=\text{span}_\R\,
\begin{bmatrix}
\begin{matrix}
1 & i \\
w & iw
\end{matrix} &
\begin{bmatrix}
-(H\bar{w})^t & -i(H\bar{w})^t \\
I_{n} & iI_{n}
\end{bmatrix}
\begin{bmatrix}
I_{n-2} \\
\xi
\end{bmatrix}
\end{bmatrix}.
\end{equation}
By construction, we can assume that $\mathcal{U}$ is such that 
$g_1^{-1}(\mathcal{U})=\{(w,\xi);\, |w|\ll 1, |\xi|\ll 1\}$.
The restriction of $g_1$ to $g_1^{-1}(\mathcal{U})$ thus is injective.
Moreover, it follows that if $\mathcal{U}_\nu\subset\mathcal{U}$ 
is a neighborhood basis of $P$, then $g_1^{-1}(\mathcal{U}_\nu)$
is a neighborhood basis of $(\ell_0,\Pi_0)$. 

We now show that $g_1$ is an immersion at $(\ell_0,\Pi_0)$.
To do this we rewrite \eqref{snorlax} by identifying
coordinate vectors $(z_0,\ldots,z_n)^t\in\C^{n+1}$ with
\begin{equation}\label{snorlaxen}
(\text{Re}\,z_0,\text{Im}\,z_0,\text{Re}\,z_1,\text{Re}\,z_2,\ldots,
\text{Re}\, z_n,\text{Im}\,z_1,\text{Im}\,z_2,\ldots,
\text{Im}\, z_n)^t\in\R^{2n+2}.
\end{equation}
For notational convenience we let
$$
h_{w,\xi}=
\begin{bmatrix}
-(H\bar{w})^t & -i(H\bar{w})^t
\end{bmatrix}
\begin{bmatrix}
I_{n-2} \\
\xi
\end{bmatrix}.
$$
If $w=u+iv$ we now have
\begin{equation}\label{pika}
g_1(u,v,\xi)=\text{span}_\R\,
\begin{bmatrix}
I_2 & \begin{matrix}
         \text{Re}\, h_{w,\xi} \\
         \text{Im}\, h_{w,\xi}
         \end{matrix} \\
\begin{matrix}
u & -v
\end{matrix} &
\begin{matrix}
I_{n-2} \\
\xi'
\end{matrix} \\
\begin{matrix}
v & u
\end{matrix} &
\xi''
\end{bmatrix},
\end{equation}
where $\xi'$ is the $2\times (n-2)$-matrix consisting of the first two rows of $\xi$
and $\xi''$ is the $n\times (n-2)$-matrix consisting of the last $n$ rows of $\xi$.
Let $B=B(u,v,\xi)$ be the top $n\times n$-submatrix of the right-hand side of 
\eqref{pika}. Notice that if $|u|\ll 1$, $|v|\ll 1$, and $|\xi|\ll 1$,
then $B$ is invertible and 
$$
B^{-1}=I_n+O(|u|+|v|+|\xi|).
$$
In view of \eqref{ABA} thus
\begin{eqnarray*}
g_1(u,v,\xi) &=& \text{span}_\R\,
\begin{bmatrix}
I_n \\
\begin{bmatrix}
\begin{matrix}
u'' & -v''  \\
v & u 
\end{matrix} &
\xi
\end{bmatrix} B^{-1}
\end{bmatrix}
B \\
&=&
\text{span}_\R\,
\begin{bmatrix}
I_n \\
\begin{matrix}
u'' & -v'' \\
v & u 
\end{matrix}
\quad\xi+O\big((|u|+|v|+|\xi|)^2\big)
\end{bmatrix},
\end{eqnarray*}
where $u''=(u_{n-1},u_n)^t$ and similarly for $v''$.
The lower $(n+2)\times n$-submatrix of the right-hand side is $g_1(u,v,\xi)$ expressed
in a chart of $G_\R(n,\C^{n+1})$ centered at $\ell_0\oplus\Pi_0$; cf.\ \eqref{stad}.
Expressed in this chart, the differential of $g_1$ at $(\ell_0,\Pi_0)$ is the linear mapping
\begin{equation}\label{haalslag}
\R^n\times\R^n\times\R^{(n+2)\times(n-2)}\ni
(u,v,\xi)\mapsto
\begin{bmatrix}
\begin{matrix}
u'' & -v'' \\
v & u
\end{matrix} &
\xi
\end{bmatrix},
\end{equation}
which clearly is injective. Thus $g_1$ is an immersion at $(\ell_0,\Pi_0)$.
Since $(\ell_0,\Pi_0)$ is an arbitrary point in $g_1^{-1}(\Sb_1)$
it follows that $g_1$ is an immersion at every point in $g_1^{-1}(\Sb_1)$, in particular in
$\mathcal{U}$. We have thus showed that the restriction of $g_1$ to 
$g_1^{-1}(\mathcal{U})$ is an injective immersion.
\end{proof}


Consider the dense open subset
$$
\widetilde{J^1}(M,\C^{n+1}):=\{(x,z,\tau)\in J^1(M,\C^{n+1});\, 
\text{rank}_\R\,\tau =n\}
$$
of $J^1(M,\C^{n+1})$. We have a natural mapping
\begin{equation}\label{pinatrium}
\pi_1\colon \widetilde{J^1}(M,\C^{n+1})\to G_\R(n,\C^{n+1}),\quad
(x,z,\tau)\mapsto \tau(T_xM),
\end{equation}
 and we let
$$
\Sb':=\pi_1^{-1} (\Sb), \quad
\Sb'_j:=\pi_1^{-1} (\Sb_j).
$$

Let $f\colon M\to\C^{n+1}$ be an embedding and recall that there is an induced mapping $j^1f\colon M\to J^1(M,\C^{n+1})$. Since $f$ is an embedding, the image is contained in
$\widetilde{J^1}(M,\C^{n+1})$. We have the commutative diagram
\begin{equation*}
\xymatrix{
 & \widetilde{J^1}(M,\C^{n+1}) \ar[d]^{\pi_1} & \Sb' \ar@{_{(}->}[l]\\
M \ar[ur]^{j^1f} 
\ar[r]^{\hat f} & G_\R(n,\C^{n+1}) 
& \Sb \ar@{_{(}->}[l]\\
}
\end{equation*}
where $\hat f\colon M\to G_\R(n,\C^{n+1})$ is the mapping
$
x\mapsto T_{f(x)}f(M).
$
By definition of $\Sb$ we have that $T_{f(x)}f(M)$ is totally real if and only if 
$\hat f(x)\notin \Sb$. It follows that
\begin{equation}\label{trium}
\text{CRsing}\, f = (j^1f)^{-1}(\Sb').
\end{equation}


\begin{lemma}\label{blaa}
The mapping $\pi_1\colon\widetilde{J^1}(M,\C^{n+1})\to G_\R(n,\C^{n+1})$
is a submersion.
\end{lemma}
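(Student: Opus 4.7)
The plan is to fix a point $p_0=(x_0,z_0,\tau_0)\in\widetilde{J^1}(M,\C^{n+1})$ and verify directly that $d\pi_1|_{p_0}$ is surjective, via a computation in the Grassmannian chart \eqref{stad}. I would begin by choosing local coordinates on $M$ near $x_0$ and on $\C^{n+1}$ near $z_0$; these trivialize the corresponding tangent bundles, so that over a neighborhood of $(x_0,z_0)$ the bundle $J^1(M,\C^{n+1})$ is identified with a product $U\times V\times\R^{(2n+2)\times n}$ (using $\C^{n+1}\simeq\R^{2n+2}$ as in the proof of Proposition~\ref{antarctica}), and $\widetilde{J^1}(M,\C^{n+1})$ corresponds to the open subset where the matrix part has real rank $n$. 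In this trivialization, $\pi_1(x,z,A)=\text{span}_\R A$ depends only on $A$, so it suffices to show that the map
\begin{equation*}
\varphi\colon A\mapsto \text{span}_\R A
\end{equation*}
from rank-$n$ matrices in $\R^{(2n+2)\times n}$ to $G_\R(n,\C^{n+1})$ is a submersion.

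Next, I would use a row permutation of $\R^{2n+2}$ (which does not change whether $\varphi$ is a submersion) to arrange that the top $n\times n$-block $B_0$ of $A_0$ is invertible, so $A_0=[B_0^t\; C_0^t]^t$. By \eqref{ABA} we then have $\text{span}_\R A_0=\text{span}_\R[I_n^t\;(C_0 B_0^{-1})^t]^t$, and by \eqref{stad} the local chart on $G_\R(n,\C^{n+1})$ centered at this span assigns to $\text{span}_\R[I_n^t\; a^t]^t$ the coordinate $a-C_0 B_0^{-1}\in\R^{(n+2)\times n}$. For a rank-$n$ matrix $A=[B^t\; C^t]^t$ near $A_0$ the block $B$ remains invertible, and a second application of \eqref{ABA} yields $\text{span}_\R A=\text{span}_\R[I_n^t\;(CB^{-1})^t]^t$. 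Hence in this chart, $\varphi$ is expressed as $A\mapsto CB^{-1}-C_0 B_0^{-1}$.

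Differentiating at $A_0$ then gives
\begin{equation*}
d\varphi_{A_0}(E_1,E_2)=E_2 B_0^{-1}-C_0 B_0^{-1} E_1 B_0^{-1},
\end{equation*}
where a tangent vector at $A_0$ is written $E=[E_1^t\; E_2^t]^t\in\R^{(2n+2)\times n}$ with $E_1\in\R^{n\times n}$ and $E_2\in\R^{(n+2)\times n}$. Taking $E_1=0$ and letting $E_2$ range over $\R^{(n+2)\times n}$ makes the image contain all matrices $E_2 B_0^{-1}$; since $B_0$ is invertible this exhausts $\R^{(n+2)\times n}$, which by \eqref{halt} has the same dimension as $G_\R(n,\C^{n+1})$. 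Thus $d\varphi_{A_0}$, and hence $d\pi_1|_{p_0}$, is surjective. There is no substantive obstacle here; the only care needed is to unwind the definitions of the jet bundle and of the Grassmannian chart so that the claim reduces to the essentially one-line linear-algebra statement above.
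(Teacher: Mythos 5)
Your proof is correct and follows essentially the same route as the paper: both reduce the claim to showing that $A\mapsto\operatorname{span}_\R A$ is a submersion from the rank-$n$ matrices onto $G_\R(n,\C^{n+1})$ and verify this in the chart \eqref{stad}. The only (cosmetic) difference is that the paper first adapts the coordinates and the parametrization of the matrix space so that $\pi_1$ becomes a literal coordinate projection, whereas you differentiate $A\mapsto CB^{-1}$ directly; both computations are valid.
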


\begin{proof}
Let $(x_0,z_0,\tau_0)\in\widetilde{J^1}(M,\C^{n+1})$. We can choose real 
linear coordinates 
$\eta$ in $\C^{n+1}=\R^{2n+2}$ centered at $z_0$ such that
\begin{equation}\label{pulled}
\tau_0(T_{x_0}M) = \text{span}_\R\,
\begin{bmatrix}
I_n \\
0
\end{bmatrix}.
\end{equation}
Let $x$ be local coordinates for $M$ centered at $x_0$.  Then $\tau_0$ becomes  
a real $(2n+2)\times n$-matrix. In view of \eqref{pulled},
\begin{equation}\label{taag}
\tau_0=
\begin{bmatrix}
\tau_0' \\
0
\end{bmatrix},
\end{equation}
for some $\tau'_0\in GL(n,\R)$.
A neighborhood of $\tau_0$ in $\text{Hom}(\R^n,\R^{2n+2})$ is
parametrized by
$$
(D',D'')\in\R^{n\times n}\times\R^{(n+2)\times n}, \quad |D'|\ll 1, |D''|\ll 1,
$$
using
\begin{equation}\label{taag2}
(D',D'')\mapsto
\begin{bmatrix}
\tau_0' + D' \\
D''(\tau'_0+D')
\end{bmatrix}.
\end{equation}
It follows that $(x,\eta,D',D'')$ are local coordinates for $J^1(M,\C^{n+1})$ centered at 
$(x_0,z_0,\tau_0)$. Now,
\begin{equation}\label{bok}
\pi_1(x,\eta,D',D'')=
\text{span}_\R\,
\begin{bmatrix}
\tau_0' + D' \\
D''(\tau'_0+D')
\end{bmatrix}
=
\text{span}_\R\,
\begin{bmatrix}
I_n \\
D''
\end{bmatrix},
\end{equation}
cf.\ \eqref{ABA}. In view of \eqref{stad} thus 
$\pi_1(x,\eta,D',D'')=D''$ in local coordinates. Therefore
$\pi_1$ clearly is a submersion.
\end{proof}

By this lemma and Proposition~\ref{antarctica}, $\Sb_j'$ are submanifolds
of $J^1(M,\C^{n+1})$ of codimension $2j(1+j)$. It follows from
the jet transversality theorem, see, e.g., \cite[Theorem~3.2.9]{Hirsch}, that
if $f\in C^\infty(M,\C^{n+1})$, then 
\begin{equation}\label{jumpyard}
j^1f\pitchfork \Sb'_j, \quad j=1,\ldots, \lfloor n/2\rfloor,
\end{equation}
possibly after an arbitrarily small perturbation of
$f$. 
Since $\text{codim}\,\Sb'_j=2j(1+j)$, if $n\leq 11$ and
\eqref{jumpyard} hold, it follows that 
$j^1f(M)\cap \Sb'_j=\emptyset$ for $j\geq 2$ and 
\begin{equation}\label{natrium}
\text{CRsing}\, f= (j^1f)^{-1}(\Sb'_1)
\end{equation}
is either empty or a smooth submanifold of $M$ of codimension $4$.
By \eqref{trium} it is also closed.
This is essentially the proof of the following proposition.
We do not give more details since it is 
a special case of Theorem~\ref{sol} below.

\begin{proposition}\label{tornet}
Assume that $n\leq 11$ and let $f_0\colon M\to\C^{n+1}$ be an embedding.
In each neighborhood of $f_0$ in $C^\infty(M,\C^{n+1})$ there is an embedding
$f \colon M\to\C^{n+1}$ such that $\text{CRsing}\, f$ is either empty or a smooth closed submanifold of $M$ of codimension $4$.
Moreover, if $\tilde f$ is in a sufficiently small $C^2$-neighborhood of 
$f$ in $C^\infty(M,\C^{n+1})$, then $\tilde f$ is an embedding and
$\text{CRsing}\, \tilde f$ and 
$\text{CRsing}\, f$ are diffeomorphic.  
\end{proposition}

\section{Making $f(\text{CRsing}\,f)$ totally real}\label{crsingtotreal}
The first objective in this section is to find a set $\Sigma''\subset J^2(M,\C^{n+1})$ 
such that $j^2f(M)\cap\Sigma''=\emptyset$ if and only if
$f(\text{CRsing}\,f)$ is totally real. If $n\leq 5$, then $f(\text{CRsing}\,f)$
is trivially totally real so we here tacitly assume that $n\geq 6$.

Let $F_\Sb\to\Sb_1$ be the fiber bundle whose fiber over $P\in\Sb_1$ is
$G_\R(n-4,P)$.\footnote{Let $F$ be 
the flag manifold of real 
$(n-4,n)$-flags in $\C^{n+1}$; it is naturally 
a fiber bundle over $G_\R(n,\C^{n+1})$. The bundle $F_\Sb$ is the restriction of $F$
to $\Sb_1$.}
Let 
$$
\Sigma\subset F_\Sb
$$
be the set of $(P,\sigma)\in F_\Sb$ such that the real $n-4$-plane $\sigma$ 
contains a complex line. 

\begin{lemma}\label{spring}
The set $\Sigma$ is a smooth closed 
submanifold of $F_\Sb$ 
of codimension $8$. 
\end{lemma}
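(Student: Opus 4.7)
The plan is to exploit the fact that since $P\in\Sb_1$ contains a \emph{unique} complex line $\ell(P)$: indeed, two distinct complex lines in $P$ would span a complex $2$-plane in $P$, contradicting $P\in\Sb_1$. Consequently any complex line contained in a real subspace $\sigma\subset P$ must coincide with $\ell(P)$, and therefore
$$
\Sigma=\{(P,\sigma)\in F_\Sb\,:\,\ell(P)\subset\sigma\}.
$$
This rewriting turns the defining property of $\Sigma$ into a linear incidence condition, which is much easier to handle than the original "contains some complex line''.

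First I would verify that $P\mapsto\ell(P)$ is a smooth map $\Sb_1\to G(1,\C^{n+1})$. This is essentially a byproduct of the proof of Proposition~\ref{antarctica}: the restriction of $g_1$ to $g_1^{-1}(\Sb_1)$ is a diffeomorphism onto $\Sb_1$, and composing its inverse with the bundle projection $\Gamma_1\to G(1,\C^{n+1})$ yields the desired smooth section. In the local coordinates used there, this just reflects the smoothness of $w\mapsto\C(1,w)^t$ together with the smooth dependence of $w$ on $P$.

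With $\ell$ in hand, $\Sigma$ is realized as the sub-bundle of $F_\Sb\to\Sb_1$ whose fiber over $P$ is $\{\sigma\in G_\R(n-4,P)\,:\,\ell(P)\subset\sigma\}$. The quotient map $\sigma\mapsto\sigma/\ell(P)$ canonically identifies this fiber with $G_\R(n-6,P/\ell(P))$. Since $\dim_\R P/\ell(P)=n-2$, formula \eqref{halt} gives fiber dimension $4(n-6)$, while the fiber of $F_\Sb$ has dimension $4(n-4)$; the fiberwise codimension is therefore $8$. Closedness of $\Sigma$ in $F_\Sb$ is immediate from the characterization above together with continuity of $\ell$.

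To promote this to a smooth submanifold statement I would choose local product trivializations: pick a smooth frame $(e_1(P),e_2(P),\ldots,e_n(P))$ of $P$ varying smoothly with $P$, such that $e_1(P)$ spans $\ell(P)$ and $e_2(P)=ie_1(P)$. Using the chart \eqref{stad} for $G_\R(n-4,P)$ in which the first $n-4$ rows of the defining matrix correspond to the coordinates along $e_1,\ldots,e_{n-4}$ and the last $4$ rows to those along $e_{n-3},\ldots,e_n$, the condition $\ell(P)\subset\sigma$ becomes the vanishing of the first two columns of the $4\times(n-4)$ parameter matrix, i.e.\ of precisely $8$ coordinate entries. The main (but purely bookkeeping) obstacle is constructing the adapted frames smoothly on a neighborhood in $\Sb_1$; all of the conceptual work is packaged in the uniqueness of $\ell(P)$ for $P\in\Sb_1$.
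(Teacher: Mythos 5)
Your proposal is correct and follows essentially the same route as the paper: both use the uniqueness of the complex line $\ell(P)$ in $P\in\Sb_1$ to reduce the defining condition of $\Sigma$ to the linear incidence $\ell(P)\subset\sigma$, and both exhibit $\Sigma$ locally, in an adapted frame, as the vanishing of the first two columns of the $4\times(n-4)$ Grassmannian coordinate matrix. Your closedness argument via continuity of $P\mapsto\ell(P)$ is slightly more direct than the paper's (which passes through the closure of the stratum of $n-4$-planes containing a complex line in $G_\R(n-4,\C^{n+1})$), but the substance is the same.
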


\begin{proof}
To see that $\Sigma$ is a submanifold,
let $P\in\Sb_1$ and recall from the proof of
Proposition~\ref{antarctica} that a neighborhood of $P$ in $\Sb_1$ can be parametrized 
by \eqref{pika}; $(u,v,\xi)\in\R^n\times\R^n\times\R^{(n+2)\times(n-2)}$ are 
local coordinates for $\Sb_1$ centered at $P$. Let $P_{u,v,\xi}\in\Sb_1$ 
be the real $n$-plane
given by \eqref{pika}.
Recall also from the proof of
Proposition~\ref{antarctica} that the $\R$-span of the 
first two columns of the right-hand side of \eqref{pika} is the unique complex line in
$P_{u,v,\xi}$. Let $\sigma$ be an 
$n-4$-plane in $P$ containing the complex line. For a suitable invertible real
$(n-2)\times(n-2)$-matrix $C$ then $\sigma$ is the $\R$-span of the first $n-4$ columns
of the $(2n+2)\times n$-matrix
$$
\begin{bmatrix}
I_2 & 0 \\
0 & C \\
0 & 0
\end{bmatrix}.
$$
It follows, cf.\ \eqref{stad}, that a chart in $F_\Sb$ centered at $(P,\sigma)$ is
given by
$$
(u,v,\xi,y)\mapsto
\left(
\text{span}_\R\,
\begin{bmatrix}
I_2 & \begin{matrix}
         \text{Re}\, h_{w,\xi} \\
         \text{Im}\, h_{w,\xi}
         \end{matrix} \\
\begin{matrix}
u & -v
\end{matrix} &
\begin{matrix}
I_{n-2} \\
\xi'
\end{matrix} \\
\begin{matrix}
v & u
\end{matrix} &
\xi''
\end{bmatrix}, 
\text{span}_\R\,
\begin{bmatrix}
I_2 & \begin{matrix}
         \text{Re}\, h_{w,\xi} \\
         \text{Im}\, h_{w,\xi}
         \end{matrix} \\
\begin{matrix}
u & -v
\end{matrix} &
\begin{matrix}
I_{n-2} \\
\xi'
\end{matrix} \\
\begin{matrix}
v & u
\end{matrix} &
\xi''
\end{bmatrix}
\begin{bmatrix}
I_2 & 0 \\
0 & C
\end{bmatrix}
\begin{bmatrix}
I_{n-4} \\
y
\end{bmatrix}
\right),
$$
where $y\in\R^{4\times (n-4)}$ and $w=u+iv\in\mathbb{C}^n$. 
In the local coordinates
$(u,v,\xi,y)$ for $F_\Sb$, the set $\Sigma$ is obtained by setting the first
two columns of $y$ to $0$. Thus,  $\Sigma$ is a submanifold of $F_\Sb$
of codimension $8$. 

To see that $\Sigma$ is closed, suppose that $(P_j,\sigma_j)\in\Sigma$ converges to
$(P,\sigma)\in F_\Sb$. 
As in Proposition~\ref{antarctica}, the set $S$ of points in $G_\R(n-4,\C^{n+1})$
corresponding to $n-4$-planes containing a complex line but no complex $2$-plane
is a submanifold. Moreover, $\overline{S}\setminus S$ is the set of $n-4$-planes
containing a complex $2$-plane. We have $\sigma_j\in S$ and $\sigma_j\to\sigma$
in $G_\R(n-4,\C^{n+1})$. Thus $\sigma\in \overline{S}$. But in fact, $\sigma\in S$
since $\sigma\subset P$
which contains a complex line but no complex $2$-plane.
Thus $(P,\sigma)\in\Sigma$, and so $\Sigma$ is closed.
\end{proof}

The reason for introducing $\Sigma$ is the following. Suppose that
$f\colon M\to\C^{n+1}$ is an embedding such that
 \eqref{jumpyard} holds.
Then, if $x\in \text{CRsing}\, f$ and $p=j^1f(x)$, 
$$
T_x \text{CRsing}\, f = (Dj^1f)_x^{-1} T_p\Sb'_1.
$$
Thus,
\begin{equation}\label{snart}
\big(T_{f(x)}f(M), \, T_{f(x)}f(\text{CRsing}\, f)\big)=
\Big(Df_x(T_xM),\, Df_x \big((Dj^1f)_x^{-1} T_p\Sb'_1\big)\Big)\in F_\Sb.
\end{equation}
By definition of $\Sigma$ it follows that
\begin{equation}\label{sno2}
T_{f(x)}f(\text{CRsing}\, f) \,\,\text{is totally real} \iff
\Big(Df_x(T_xM),\, Df_x \big((Dj^1f)_x^{-1} T_p\Sb'_1\big)\Big)\notin \Sigma.
\end{equation}

\smallskip

We will now define $\Sigma''$. To begin with, let 
$$
\Sb_1''\subset J^2(M,\C^{n+1})
$$ 
be the inverse image of $\Sb'_1$ under the natural submersion
$J^2(M,\C^{n+1})\to J^1(M,\C^{n+1})$.
Since $\Sb'_1$ is a submanifold of codimension $4$ also $\Sb''_1$ is.
Let $p\in\Sb''_1$. Recall that $s(p)=:x$ and $t(p)=:z$ are the source and target of 
$p$ respectively. Recall also that if $f\in C^\infty(M,\C^{n+1})$ is such that
$p=[f]_{x,2}$, then $D_p=Df_x$ and $H_p=Dj^1f_x$; cf.\ \eqref{Hp}.
Let 
$$\widetilde{\Sb''_1}\subset \Sb''_1
$$ 
be the set of $p\in\Sb''_1$
such that  $H_p\pitchfork T_{(s(p),t(p),D_p)}\Sb'_1$.
Clearly, $\widetilde{\Sb''_1}$ is open in $\Sb''_1$. It is also non-empty since if 
$g\in C^\infty(M,\C^{n+1})$ and $j^1g\pitchfork \Sb'_1$, 
then $[g]_{x,2}\in\widetilde{\Sb''_1}$ for any $x\in (j^1g)^{-1}\Sb'_1$. 
By jet transversality there are plenty of such $g$.
Let
\begin{equation}\label{sno}
\pi_2\colon \widetilde{\Sb''_1}\to F_\Sb,\quad
p\mapsto \Big(\pi_1\big(s(p),t(p),D_p\big),\, 
D_p \big(H_p^{-1} T_{(s(p),t(p),D_p)}\Sb'_1\big)\Big),
\end{equation}
where $\pi_1$ is the mapping \eqref{pinatrium}, and let
$$
\Sigma'':=\pi_2^{-1} \Sigma.
$$

\begin{proposition}\label{sommarlov}
Let $f\colon M\to\C^{n+1}$ be an embedding such that 
\eqref{jumpyard} holds. Then $f(\text{CRsing}\, f)$ it totally real if and only if
$j^2f(M)\cap\Sigma''=\emptyset$.
\end{proposition}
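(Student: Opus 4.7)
The plan is to unpack the definitions and show that, for $x\in\text{CRsing}\, f$, the pair $\pi_2(j^2f(x))\in F_\Sb$ is precisely the pair $(T_{f(x)}f(M),\, T_{f(x)}f(\text{CRsing}\, f))$ appearing in \eqref{snart}, so that the criterion \eqref{sno2} immediately delivers the equivalence.

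First I would dispose of the case $x\notin\text{CRsing}\, f$. Since $\text{CRsing}\, f=(j^1f)^{-1}\Sb'_1$, such a point has $j^1f(x)\notin\Sb'_1$, and hence $j^2f(x)\notin\Sb''_1$ under the natural submersion $J^2(M,\C^{n+1})\to J^1(M,\C^{n+1})$. In particular $j^2f(x)\notin\Sigma''\subset\widetilde{\Sb''_1}\subset\Sb''_1$, so these points contribute nothing to $j^2f(M)\cap\Sigma''$ and may be ignored.

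Next I would take $x\in\text{CRsing}\, f$ and set $p=j^2f(x)$. Reading off jet data gives $s(p)=x$, $t(p)=f(x)$, $D_p=Df_x$, $H_p=(Dj^1f)_x$, and $(s(p),t(p),D_p)=j^1f(x)\in\Sb'_1$, so $p\in\Sb''_1$. The hypothesis \eqref{jumpyard} (applied with $j=1$) translates exactly into $H_p\pitchfork T_{(s(p),t(p),D_p)}\Sb'_1$, which is the defining condition of $\widetilde{\Sb''_1}$; hence $p\in\widetilde{\Sb''_1}$ and $\pi_2(p)$ is defined. Now $\pi_1(j^1f(x))=Df_x(T_xM)=T_{f(x)}f(M)$ by the definition of $\pi_1$, and the identity $(Dj^1f)_x^{-1}T_{j^1f(x)}\Sb'_1=T_x\text{CRsing}\, f$ recorded in the paragraph preceding \eqref{snart} shows that the image of this preimage under $D_p=Df_x$ is $T_{f(x)}f(\text{CRsing}\, f)$. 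Thus $\pi_2(j^2f(x))$ coincides with the pair in \eqref{snart}.

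Finally I would invoke the definition $\Sigma''=\pi_2^{-1}\Sigma$ together with \eqref{sno2}: these together give $j^2f(x)\in\Sigma''$ if and only if $T_{f(x)}f(\text{CRsing}\, f)$ contains a complex line, i.e.\ fails to be totally real. Quantifying over $x\in M$ and combining with the first case yields $j^2f(M)\cap\Sigma''=\emptyset$ if and only if every tangent space of $f(\text{CRsing}\, f)$ is totally real, which is the statement that $f(\text{CRsing}\, f)$ is totally real. The only genuinely substantive step is the verification that \eqref{jumpyard} forces $j^2f(x)\in\widetilde{\Sb''_1}$ so that $\pi_2$ is defined at $j^2f(x)$; the remainder is bookkeeping of source, target, and derivative data of second jets.
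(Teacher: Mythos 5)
Your proposal is correct and follows essentially the same route as the paper: the paper's proof is exactly the commutative diagram \eqref{mixer} together with \eqref{snart} and \eqref{sno2}, and your pointwise verification that $\pi_2(j^2f(x))$ equals the pair in \eqref{snart} is just an explicit unpacking of that diagram's commutativity. Your explicit check that \eqref{jumpyard} places $j^2f(\text{CRsing}\,f)$ inside $\widetilde{\Sb''_1}$ (so that $\pi_2$ is defined there) is a detail the paper leaves implicit here and only records later, in the proof of Theorem~\ref{sol}, as \eqref{snabelpenna}.
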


\begin{proof}
In view of \eqref{natrium} and the definition of $\Sb''_1$ we have
$(j^2f)^{-1}\Sb''_1=\text{CRsing}\, f$. Since $\Sigma''\subset\Sb''_1$,
what we need to show is 
thus that $f(\text{CRsing}\, f)$ it totally real if and only if
$j^2f(\text{CRsing}\, f)\cap\Sigma''=\emptyset$.

By \eqref{snart} and \eqref{sno} we have the commutative diagram
\begin{equation}\label{mixer}
\xymatrix{
 & \widetilde{\Sb''_1} \ar[d]^{\pi_2} & \Sigma'' \ar@{_{(}->}[l] \ar[d]\\
\text{CRsing}\, f \ar[ur]^{j^2f} 
\ar[r]^{\tilde f} & F_\Sb 
& \Sigma \ar@{_{(}->}[l]
}
\end{equation}
where $\tilde f$ is the mapping 
$$
x\mapsto \big(T_{f(x)}f(M), \, T_{f(x)}f(\text{CRsing}\, f)\big).
$$
By \eqref{snart} and \eqref{sno2},
$f(\text{CRsing}\, f)$ is totally real if and only if $\tilde{f}^{-1}\Sigma=\emptyset$.
Since $\tilde{f}^{-1}\Sigma=(j^2f)^{-1}\Sigma''$ by the commutativity of \eqref{mixer},
it follows that $f(\text{CRsing}\, f)$ is totally real if and only if
$j^2f(\text{CRsing}\, f)\cap\Sigma''=\emptyset$. This completes the proof.
\end{proof}

The next step is to show that $\Sigma''$ is a submanifold of $J^2(M,\C^{n+1})$
of codimension $12$. This follows from the next proposition
since $\Sigma''\subset\Sb''_1$, $\Sb''_1$ is a submanifold
of $J^2(M,\C^{n+1})$ of codimension $4$, and $\Sigma$ is a submanifold
of $F_\Sb$ of codimension $8$.

\begin{proposition}\label{submersion2}
The mapping $\pi_2\colon \widetilde{\Sb''_1}\to F_\Sb$ is a submersion.
\end{proposition}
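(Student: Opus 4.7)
The plan is to verify that $D\pi_2$ is surjective at each $p \in \widetilde{\Sb''_1}$ by exploiting the fiber-bundle structure $F_\Sb \to \Sb_1$ (with fiber $G_\R(n-4,P)$ over $P$) together with the natural submersion $\rho\colon J^2(M,\C^{n+1}) \to J^1(M,\C^{n+1})$. The idea is that varying the 1-jet part of $p$ will cover the base directions of $F_\Sb$, while varying only the pure second-order part (the fiber of $\rho$ through $p$) will cover the fiber directions $T_\sigma G_\R(n-4,P)$.

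For the horizontal step, the composition of $\pi_2$ with the bundle projection $F_\Sb \to \Sb_1$ coincides with the restriction of $\pi_1 \circ \rho$ to $\widetilde{\Sb''_1}$. Since $\Sb''_1 = \rho^{-1}(\Sb'_1)$, the map $\rho$ restricts to a submersion $\Sb''_1 \to \Sb'_1$, and by Lemma~\ref{blaa} the map $\pi_1$ restricts to a submersion $\Sb'_1 \to \Sb_1$. Hence the composition is a submersion, and $D\pi_2$ covers $T_P \Sb_1$.

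The vertical step is the main obstacle. Working in local coordinates as in the proof of Lemma~\ref{blaa}, $\pi_1$ is literally projection to the $D''$-coordinate and $P$ corresponds to $D'' = 0$. For a representative $g$ of $p$, varying only the pure second-order part of the 2-jet amounts to varying the Hessian $D^2 g_{s(p)}$. A direct calculation shows that the $D''$-component of $H_p(v)$ equals $M_v(\tau'_0)^{-1}$, where $M_v \in \R^{(n+2)\times n}$ has $j$-th column given by the ``lower $\R^{n+2}$-block'' of the Hessian evaluated on $(v, e_j)$. Consequently the subspace $V := H_p^{-1}(T\Sb'_1) \subset T_{s(p)}M$ is the kernel of the linear map $v \mapsto L(M_v(\tau'_0)^{-1})$, where $L\colon \R^{(n+2)\times n} \to \R^4$ is any surjection with kernel $T_P \Sb_1$; and $\sigma = D_p(V) = \tau'_0(V) \oplus 0 \subset P$.

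The remaining task is to show that as the symmetric bilinear form $D^2 g_{s(p)}$ varies freely, the induced map $v \mapsto L(M_v(\tau'_0)^{-1}) \in \R^4$ attains every element of $\mathrm{Hom}(\R^n, \R^4)$; then $V$ sweeps submersively through $G_\R(n-4, \R^n)$, which identifies via $\tau'_0$ with $G_\R(n-4, P)$. By \eqref{haalslag} the functional $L$ depends only on the first two columns of its matrix input, so only those Hessian entries with a column index equal to $1$ or $2$ matter. A column-by-column inspection then confirms that the symmetry of the Hessian does not obstruct surjectivity: for each index $l \geq 3$ the relevant entries are genuinely free, and for $l = 1, 2$ enough independent entries remain despite the coupling. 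Combined with the horizontal step, $D\pi_2$ is surjective at $p$, as desired.
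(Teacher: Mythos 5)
Your proposal is correct and follows essentially the same route as the paper's proof: surjectivity onto the base directions $T_P\Sb_1$ comes from the $1$-jet part via Lemma~\ref{blaa}, and surjectivity onto the fiber directions $T_\sigma G_\R(n-4,P)$ comes from varying the pure second-order part, the decisive point being that the four functionals cutting out $T_P\Sb_1$ involve only the first two columns of the $D''$-matrix, so the symmetry of the Hessian does not obstruct realizing every element of $\mathrm{Hom}(\R^n,\R^4)$ (after normalizing $\tau'_0=I_n$ by a linear change of chart on $M$, which is needed for your claim that only column indices $1,2$ of the Hessian matter). The only item the paper treats that you pass over is the explicit verification that $\pi_2$ is smooth, which in the paper falls out of the same coordinate formula \eqref{sudd} underlying your vertical step.
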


We postpone the proof until the end of this section.


\begin{theorem}\label{sol}
Let $M$ be a compact smooth real $n$-dimensional manifold with $n\leq 11$
and let $f_0\colon M\to \C^{n+1}$ be an embedding. In each neighborhood of $f_0$
in $C^\infty(M,\C^{n+1})$ there is an embedding 
$f\colon M\to \C^{n+1}$ such that 
$\text{CRsing}\, f$ is either empty or a smooth closed submanifold of $M$ of codimension
$4$ and $f(\text{CRsing}\, f)$ is totally real. 
Moreover, if $\tilde f$ is in a sufficiently small $C^2$-neighborhood of $f$ in 
$C^\infty(M,\mathbb{C}^{n+1})$,
then $\tilde f$ is an embedding, $\text{CRsing}\, \tilde f$ and $\text{CRsing}\, f$ are diffeomorphic, and 
$\tilde f(\text{CRsing}\, \tilde f)$ is totally real.
%
\end{theorem}

\begin{proof}
By the jet transversality theorem the set of smooth mappings $f\colon M\to\C^{n+1}$
satisfying \eqref{jumpyard} and 
\begin{equation}\label{yardjump}
j^2 f \pitchfork\Sigma''
\end{equation}
is dense in $C^\infty(M,\C^{n+1})$. In each neighborhood of 
$f_0$ thus there is
such a smooth mapping.
Since $f_0$ is an embedding and the set of embeddings 
is open in $C^\infty(M,\C^{n+1})$, 
there is a smooth embedding  $f\colon M\to\mathbb{C}^{n+1}$
satisfying \eqref{jumpyard} and \eqref{yardjump} in each neighborhood of $f_0$.
Since $n\leq 11$ and $\text{codim}\, \Sigma''=12$, \eqref{yardjump}
precisely means that $j^2 f(M)\cap \Sigma''=\emptyset$. By 
Proposition~\ref{sommarlov} thus $f(\text{CRsing}\, f)$ is totally real.
By the paragraph preceding Proposition~\ref{tornet}, $\text{CRsing}\, f$ is 
either empty or a smooth closed submanifold of $M$ of codimension $4$.

It is clear that if $\tilde f\colon M\to\C^{n+1}$ is sufficiently $C^2$-close to $f$,
then $\tilde f$ is an embedding. To see that 
$\text{CRsing}\, \tilde f$ and $\text{CRsing}\, f$ are diffeomorphic and that
$\tilde f(\text{CRsing}\, \tilde f)$ is totally real
it suffices to see that $\tilde f$ satisfies \eqref{jumpyard} and \eqref{yardjump}.

Let $K:=J^1(M,\C^{n+1})\setminus\widetilde{J^1}(M,\C^{n+1})$; it is a closed set
and $j^1 f(M)\cap K=\emptyset$ since $ f$ is an embedding.
Let $S=\Sb_2\cup\cdots\cup\Sb_{\lfloor n/2\rfloor}$ and
$S'=\Sb'_2\cup\cdots\cup\Sb'_{\lfloor n/2\rfloor}$. 
By the paragraph preceding  Proposition~\ref{tornet} we have 
$j^1 f(M)\cap S'=\emptyset$ since \eqref{jumpyard} holds. Hence,
\begin{equation}\label{snabelulv}
j^1 f(M)\cap (K\cup S')=\emptyset.
\end{equation}
By Proposition~\ref{antarctica}, 
$S$ is closed, and since
$
S'
=\pi_1^{-1}S,
$
we see that $S'$ is closed in $\widetilde{J^1}(M,\C^{n+1})$. It follows that 
$K\cup S'$ is closed in $J^1(M,\C^{n+1})$.
Since $j^1 f(M)$ is compact it follows that $\tilde f$ 
satisfies \eqref{snabelulv} as well. 
Thus $j^1\tilde f(M)\cap\Sb'_j=\emptyset$, $j\geq 2$. Moreover,
since $j^1 f\pitchfork \Sb'_1$ and 
$\overline{\Sb'_1}\setminus \Sb'_1\subset K\cup S'$ in view of 
Proposition~\ref{antarctica},
it follows that $j^1 \tilde f\pitchfork \Sb'_1$. 
Hence, $\tilde f$ satisfies \eqref{jumpyard}.

It remains to see that 
 $j^2\tilde f\pitchfork \Sigma''$, which for dimensional reasons means that
$j^2\tilde f(M)\cap \Sigma''=\emptyset$.
If $j^1 f(M)\cap \Sb'_1=\emptyset$, then $\text{CRsing}\, f=\emptyset$.
It follows that $\text{CRsing}\, \tilde f=\emptyset$ and so, in particular,
 $j^2\tilde f(M)\cap \Sigma''=\emptyset$.
Now assume that $j^1 f(M)\cap \Sb'_1\neq\emptyset$. 
To see that $j^2\tilde f(M)\cap \Sigma''=\emptyset$ in this case
it suffices to show that $j^2 f(M)\cap\overline{\Sigma''}=\emptyset$. 
Since $j^1 f\pitchfork \Sb'_1$, for each $p\in j^2 f(M)\cap \Sb''_1$
we have that $H_p$, 
the differential of $j^1 f$ at $s(p)$, is transversal to 
$T_{(s(p),t(p),D_p)}\Sb'_1$. Thus 
\begin{equation}\label{snabelpenna}
j^2 f(M)\cap \Sb''_1\subset \widetilde{\Sb''_1}.
\end{equation}
Suppose now to get a contradiction that $j^2 f(M)\cap\overline{\Sigma''}\neq \emptyset$ and
let $p\in j^2 f(M)\cap\overline{\Sigma''}$.
By Lemma~\ref{spring} and Proposition~\ref{submersion2},
$\Sigma''$ is closed in $\widetilde{\Sb''_1}$. Since \eqref{yardjump} holds we have
$j^2 f(M)\cap\Sigma''=\emptyset$, and thus $p$ is in the closure of
$\widetilde{\Sb''_1}$ but not in $\widetilde{\Sb''_1}$.
By \eqref{snabelpenna}, if $p\in\Sb''_1$, then $p\in\widetilde{\Sb''_1}$, which 
is impossible. Thus, $p\in\overline{\Sb''_1}\setminus\Sb''_1$.
But then the image of $p$ under the natural submersion 
$J^2(M,\C^{n+1})\to J^1(M,\C^{n+1})$ is in $\overline{\Sb'_1}\setminus\Sb'_1$.
Since this image is $j^1 f(s(p))$ we get $j^1 f(s(p))\in K\cup S'$, which is a contradiction by \eqref{snabelulv}. 
Hence, $j^2 f(M)\cap\overline{\Sigma''}= \emptyset$ and the proof is complete.
\end{proof}

\begin{proof}[Proof of Proposition~\ref{submersion2}]
Let $p\in\widetilde{\Sb''_1}$. To begin with, we describe the tangent space of
$\Sb'_1$ at the image $(s(p),t(p),D_p)$ of $p$ in $J^1(M,\C^{n+1})$. We will
find $4$ linear functionals on $T_{(s(p),t(p),D_p)}J^1(M,\C^{n+1})$ such that the intersection of their kernels is $T_{(s(p),t(p),D_p)}\Sb'_1$.

Let $P=\pi_1(s(p),t(p),D_p)\in\Sb_1\subset G_\R(n,\C^{n+1})$.
We will use the $\C$-basis $\varepsilon$ for $\C^{n+1}$ introduced in
the proof of Proposition~\ref{antarctica}. As in that proof we will also
identify  $(z_0,\ldots,z_n)^t\in\C^{n+1}$
(with respect to the $\varepsilon$-basis) with real vectors \eqref{snorlaxen} 
in $\R^{2n+2}$. This gives us
real linear coordinates $\eta$ on $\C^{n+1}$ such that $P$ is given by the right-hand side of
\eqref{pulled}; cf.\ \eqref{pika} with $u=v=0=\xi$.
Following the proof of Lemma~\ref{blaa},
each choice of local coordinates $x$ for $M$ centered at $s(p)$ 
then gives us local coordinates
\begin{equation}\label{lokala}
(x,\eta,D',D'')\in\R^n\times \R^{2n+2}\times \R^{n\times n}\times\R^{(n+2)\times n},
\,\, |x|, |\eta|, |D'|, |D''|\ll 1,
\end{equation}
for $J^1(M,\C^{n+1})$ centered at $(s(p),t(p),D_p)$. Moreover,
$D''$ is local coordinates for $G_\R(n,\C^{n+1})$ centered at $P$ and
\begin{equation}\label{vrmote}
\pi_1(x,\eta,D',D'')=D''.
\end{equation}
With respect to these local coordinates, the tangent space of $\Sb_1$ at $P$ is parametrized
by \eqref{haalslag}. 

Let $\delta''$ be a tangent vector of $G_\R(n,\C^{n+1})$ at $P$; with respect to the 
local coordinates $D''$, $\delta''$ is
a real $(n+2)\times n$-matrix. We define functionals $\lambda_i$ on 
$T_PG_\R(n,\C^{n+1})$ by
$$
\lambda_1(\delta'')=Tr\left(
\begin{bmatrix}
1 & 0 & \cdots & 0 & 0 \\
0 & 0 & \cdots & -1 & 0 \\
\vdots & \vdots & & \vdots & \vdots \\
0 & 0 & \cdots & 0 & 0 
\end{bmatrix}
\delta''\right), \,\,
\lambda_2(\delta'')=Tr\left(
\begin{bmatrix}
0 & 1 & \cdots & 0 & 0 \\
0 & 0 & \cdots & 0 & -1 \\
\vdots & \vdots & & \vdots & \vdots \\
0 & 0 & \cdots & 0 & 0 
\end{bmatrix}
\delta''\right),
$$
$$
\lambda_3(\delta'')=Tr\left(
\begin{bmatrix}
0 & 0 & \cdots & 1 & 0 \\
1 & 0 & \cdots & 0 & 0 \\
\vdots & \vdots & & \vdots & \vdots \\
0 & 0 & \cdots & 0 & 0 
\end{bmatrix}
\delta''\right), \,\,
\lambda_4(\delta'')=Tr\left(
\begin{bmatrix}
0 & 0 & \cdots & 0 & 1 \\
0 & 1 & \cdots & 0 & 0 \\
\vdots & \vdots & & \vdots & \vdots \\
0 & 0 & \cdots & 0 & 0 
\end{bmatrix}
\delta''\right).
$$
In view of \eqref{haalslag}, $\delta''$
is in the tangent space of $\Sb_1$ at $P$ if and only if $\lambda_i(\delta'')=0$,
$i=1,\ldots,4$.
Identifying the local coordinates for $J^1(M,\C^{n+1})$ centered at
$(s(p),t(p),D_p)$ and tangent vectors at $(s(p),t(p),D_p)$,
by \eqref{vrmote} thus a tangent vector $(x,\eta,D',D'')$ at $(s(p),t(p),D_p)$ is 
in the tangent space of $\Sb'_1$ if and only if $\lambda_i(D'')=0$.
We notice that the functionals $\lambda_i$ depend on $P\in\Sb_1$. Since $\Sb_1$ is a submanifold, for $P'\in\Sb_1$ one can choose $\lambda_i^{P'}$ depending smoothly
on $P'\in\Sb_1$ such that $\delta''$ is in the tangent space of $\Sb_1$ at $P'$
if and only if $\lambda_i^{P'}(\delta'')=0$.

\smallskip

Next, we find an expression for the linear mapping $H_p$ from
$T_{s(p)}M$ to the tangent space of $J^1(M,\C^{n+1})$
at $(s(p),t(p),D_p)$. 
Let $g\in C^\infty(M,\C^{n+1})$ be a representative of $p$, and let $g^j$ be the components of $g$ with respect to the coordinates $\eta$ on $\R^{2n+2}=\C^{n+1}$. Consider the matrix-valued function
\begin{equation*}
\R^n\ni x\mapsto G(x) = \frac{\partial g^j}{\partial x_k}(x) \in \R^{(2n+2)\times n}.
\end{equation*}
Notice that by our choice of coordinates $\eta$, $G(0)$ equals
the right-hand side of
\eqref{taag} for some $\tau'_0\in GL(n,\R)$. After a linear change of the $x$-coordinates
we can assume that $\tau'_0=I_n$; this coordinate change does not affect the coordinates
$D''$ and, consequently, not the expression for the functionals $\lambda_i$.
A direct calculation shows that the differential of $G$ at $0$ is 
the linear mapping
$$
\R^n\ni x\mapsto
\begin{bmatrix}
\sum_\ell H^1_{1,\ell}x_\ell & \cdots & \sum_\ell H^1_{n,\ell}x_\ell \\
\vdots & & \vdots  \\
\sum_\ell H^{2n+2}_{1,\ell} x_\ell & \cdots & \sum_\ell H^{2n+2}_{n,\ell}x_\ell
\end{bmatrix}
\in \R^{(2n+2)\times n},
$$
where
$$
H^j_{k,\ell} = \frac{\partial^2 g^j}{\partial x_k\partial x_\ell}(0).
$$
The image of the differential thus is the $\R$-span of
the tangent vectors 
\begin{equation}\label{tangent}
\nu_1=
\begin{bmatrix}
H^1_{1,1} & \cdots & H^1_{n,1} \\
\vdots & & \vdots  \\
H^{2n+2}_{1,1} & \cdots & H^{2n+2}_{n,1}
\end{bmatrix},
\ldots,
\nu_n=
\begin{bmatrix}
H^1_{1,n} & \cdots & H^1_{n,n} \\
\vdots & & \vdots  \\
H^{2n+2}_{1,n} & \cdots & H^{2n+2}_{n,n}
\end{bmatrix}.
\end{equation}
The mapping \eqref{taag2} relates the non-standard coordinates $(D',D'')$ for 
$\R^{(2n+2)\times n}$ centered at $0$ to the standard ones
centered at $G(0)$; recall that $G(0)$ equals
the right-hand side of
\eqref{taag} with $\tau'_0=I_n$.
Under this coordinate change, a tangent vector $(\delta',\delta'')$ at $0$ transforms as
$$
(\delta',\delta'')\mapsto 
\begin{bmatrix}
\delta' \\
\delta''
\end{bmatrix}.
$$
Under the inverse of \eqref{taag2} thus the tangent vectors \eqref{tangent} 
transform as
\begin{equation}\label{bil}
\nu_\ell\mapsto (\nu'_\ell,\nu''_\ell),\quad
\nu'_\ell=
\begin{bmatrix}
H^1_{1,\ell} & \cdots & H^1_{n,\ell} \\
\vdots & & \vdots  \\
H^{n}_{1,\ell} & \cdots & H^{n}_{n,\ell}
\end{bmatrix}, \,\,
\nu''_\ell=
\begin{bmatrix}
H^{n+1}_{1,\ell} & \cdots & H^{n+1}_{n,\ell} \\
\vdots & & \vdots  \\
H^{2n+2}_{1,\ell} & \cdots & H^{2n+2}_{n,\ell}
\end{bmatrix}.
\end{equation}
With respect to the local coordinates $(x,\eta,D',D'')$ for $J^1(M,\C^{n+1})$
centered at $(s(p),t(p),D_p)$, 
we get that $H_p$ is the linear mapping
\begin{equation}\label{framme}
H_p\colon x\mapsto \big(x,G(0)x,\sum_\ell \nu'_\ell x_\ell, \sum_\ell \nu''_\ell x_\ell\big),
\end{equation}
where we have identified the local coordinates $x$ centered at $s(p)$ with 
tangent vectors at $s(p)$.

\smallskip

We now check that $\pi_2$ is smooth. 
The local coordinates \eqref{lokala}
for $J^1(M,\C^{n+1})$ can be extended to local coordinates 
$$
(x,\eta,D',D'',h^1,\ldots,h^{2n+2})\in  
\R^n\times \R^{2n+2}\times \R^{n\times n}\times\R^{(n+2)\times n}
\times Sym(n)^{2n+2}
$$
for $J^2(M,\C^{n+1})$ centered at $p$. Here $h^j$ are symmetric real 
$n\times n$-matrices such that 
$
|h^j|\ll 1
$.
If $p'=(x,\eta,D',D'',h)$, then
$H_{p'}$ is given by  \eqref{framme}
with $\nu'_\ell$ and $\nu''_\ell$ replaced by $\nu'_\ell(h)$ and $\nu''_\ell(h)$, respectively,
where $\nu'_\ell(h)$ and $\nu''_\ell(h)$ are as in \eqref{bil} with
$H^j_{k\ell}$ replaced by $H^j_{k\ell}+h^j_{k\ell}$.
Assume that $P'=\pi_1(s(p'),t(p'),D_p)\in\Sb_1$ and let
$$
\alpha(h)=
\begin{bmatrix}
\lambda_1(\nu''_1(h)) & \cdots & \lambda_1(\nu''_n(h)) \\
\vdots & & \vdots \\
\lambda_4(\nu''_1(h)) & \cdots & \lambda_4(\nu''_n(h))
\end{bmatrix},
$$
where $\lambda_i=\lambda_i^{P'}$. 
Notice that $\alpha$ depends smoothly on $h$ and $P'$. 

In view of \eqref{framme} and the first part of the proof, 
$H_{p'}(x)$ is in the tangent space of $\Sb'_1$ at 
$(s(p'),t(p'),D_{p'})$ if and only if $\alpha(h) x=0$.
Since $H_{p'}\pitchfork T_{(s(p'),t(p'),D_{p'})}\Sb'_1$, the kernel of $\alpha$
has codimension $4$. Thus $\alpha$ has an invertible $4\times 4$-minor. 
For a suitable invertible $n\times n$-matrix $A$ thus
$$
\alpha(h)A=
\begin{bmatrix}
\alpha'(h) & \alpha''(h)
\end{bmatrix}
=
\begin{bmatrix}
0 & I_4
\end{bmatrix}
+O(|h|+|D'|+|D''|).
$$
After the linear change $\tilde x =A^{-1}x$ of the coordinates in $M$, 
but keeping the coordinates $(x,\eta,D',D'',h)$ for $J^2(M,\C^{n+1})$,
we then have  
$\alpha(h)A\tilde x=0$ if and only if 
$$
x''=-\big(\alpha''(h)\big)^{-1}\alpha'(h)x',
$$
where $\tilde x=(x',x'')^t$, $x'\in\R^{n-4}$, $x''\in\R^4$. It follows that
\begin{equation}\label{sudd}
D_{p'}\big(H^{-1}_{p'}T_{(s(p'),t(p'),D_p)}\Sb'_1\big)
=\text{span}_\R\,
\begin{bmatrix}
I_n+D' \\
D''(I_n+D')
\end{bmatrix}
\begin{bmatrix}
-I_{n-4} \\
\big(\alpha''(h)\big)^{-1}\alpha'(h)
\end{bmatrix}.
\end{equation}
The right-hand side depends smoothly on $D'$, $D''$, and $h$. By 
\eqref{sno} thus $\pi_2$ is smooth.

\smallskip

Finally we show that the differential of $\pi_2$ at $p$ is surjective. 
In view of \eqref{sno} and Lemma~\ref{blaa} it suffices to see that \eqref{sudd} 
as a function of $h$, with $D'=0=D''$, to $G_\R(n-4,P)$ has surjective differential
at $h=0$. In Grassmannian coordinates, cf.\ \eqref{stad}, this amounts to show that
\begin{equation}\label{dyk}
h\mapsto -\big(\alpha''(h)\big)^{-1}\alpha'(h)
\end{equation}
has surjective differential at $h=0$. We have that
$\alpha'(h)$ and  $\alpha''(h)$ are linear in $h$. Since $\alpha'(0)=0$ and 
$\alpha''(0)=I_4$ a straightforward calculation gives that the differential of
\eqref{dyk} at $h=0$ is the linear mapping
$$
Sym(n)^{2n+2}\ni \mathfrak{h}\mapsto -\alpha'(\mathfrak{h})
=
-\begin{bmatrix}
\lambda_1(\nu''_1(\mathfrak{h})) & \cdots & \lambda_1(\nu''_{n}(\mathfrak{h})) \\
\vdots & & \vdots \\
\lambda_4(\nu''_1(\mathfrak{h})) & \cdots & \lambda_4(\nu''_{n}(\mathfrak{h}))
\end{bmatrix}A',
$$
where $A'$ is the first $n-4$ columns of $A$ and $\nu''_\ell(\mathfrak{h})$ are as in
\eqref{bil} with $H^j_{k,\ell}$ replaced by $\mathfrak{h}^j_{k,\ell}$. We need to check
that $\alpha'(\mathfrak{h})$  can be any given $4\times (n-4)$-matrix by choosing
$\mathfrak{h}$ appropriately. Since $A'$ contains an invertible $(n-4)\times(n-4)$-minor
it suffices to see that
$\alpha(\mathfrak{h})$
can be any given $4\times n$-matrix by choosing
$\mathfrak{h}$ appropriately. A simple calculation gives
$$
\lambda_1(\nu''_\ell(\mathfrak{h})) = 
\mathfrak{h}^{n+1}_{1,\ell}-\mathfrak{h}^{2n+1}_{2,\ell}, \quad
\lambda_2(\nu''_\ell(\mathfrak{h})) = 
\mathfrak{h}^{n+2}_{1,\ell}-\mathfrak{h}^{2n+2}_{2,\ell},
$$
$$
\lambda_3(\nu''_\ell(\mathfrak{h})) = 
\mathfrak{h}^{n+1}_{2,\ell} + \mathfrak{h}^{2n+1}_{1,\ell}, \quad
\lambda_4(\nu''_\ell(\mathfrak{h})) = 
\mathfrak{h}^{n+2}_{2,\ell} + \mathfrak{h}^{2n+2}_{1,\ell}.
$$
It is then straightforward to check that one can choose the symmetric matrices
$\mathfrak{h}^{n+1}$, $\mathfrak{h}^{n+2}$, $\mathfrak{h}^{2n+1}$, and
$\mathfrak{h}^{2n+2}$ so that entry $(i,\ell)$ in $\alpha(\mathfrak{h})$ becomes $1$ 
and all other entries $0$. For instance, one obtains this for entry $(1,\ell)$ with $\ell\geq 3$
if $\mathfrak{h}^j=0$ for all $j\neq n+1$ and
$\mathfrak{h}^{n+1}$ has $1$ in entries $(1,\ell)$
and $(\ell,1)$ and $0$ elsewhere. Since $\alpha(\mathfrak{h})$ is linear it follows that 
it can be any given $4\times n$-matrix by choosing $\mathfrak{h}$ appropriately.
Hence, $\pi_2$ has surjective differential at $p$, and the proof is complete.
\end{proof}

%
%
%
%

\section{Proof of Theorem~\ref{main1}}\label{pfthm}

Let $f\colon M\to \C^{n+1}$ be an embedding as in Theorem~\ref{sol}.
Assume first that $\text{CRsing}\, f$ is empty. Then by \cite[Theorem~2]{LW}
there is a smooth perturbation $\varphi\colon f(M)\to\mathbb{C}^{n+1}$
such that $\varphi\circ f(M)$ is polynomially convex and totally real.
Thus $\varphi\circ f\colon M\to\mathbb{C}^{n+1}$ is an embedding with the
required properties.

Assume now instead that $\text{CRsing}\, f$ is a closed submanifold and that
$f(\text{CRsing}\, f)$ is totally real. In this case, to conclude the proof of Theorem~\ref{main1}, we will use the following theorem. As mentioned,
it is a slight generalization
of \cite[Theorem~1.4]{AW} and is proved in \cite{ASKW}.

\begin{theorem}\label{trocadero}
Let $K\subset X\subset \C^m$ be compact sets such that $K$ is polynomially convex and
$X\setminus K$ is a totally real manifold of (real) dimension $d<m$.
For any $k\in\mathbb{N}$ and any neighborhood $\mathcal{U}$ of $\text{id}_{\C^m}$
in $C^k$-topology,
there is a $C^\infty$ diffeomorphism $\psi\colon \C^m\to\C^m$ such that
\begin{itemize}
\item[(i)] $\psi\in\mathcal{U}$ and $\psi|_K=\text{id}_K$,
\item[(ii)]  for each $z\in X$, the differential $D\psi_z\colon \C^m\to\C^m$
is $\C$-linear,
\item[(iii)] $\psi(X)$ is polynomially convex.
\end{itemize}
\end{theorem}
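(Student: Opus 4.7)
My plan is to build $\psi$ as the limit of an inductive composition
$\psi = \lim_{j\to\infty} \psi_j \circ \cdots \circ \psi_1$,
where each $\psi_j$ is a $C^\infty$ diffeomorphism of $\C^m$ very close to $\mathrm{id}_{\C^m}$ in $C^k$, each $\psi_j$ is the identity on an open neighborhood of a polynomially convex compact set $L_{j-1}$ that grows along an exhaustion of $X$, and at each step an additional totally real piece of $X$ is absorbed into a polynomially convex image. Concretely, I fix a $C^\infty$ exhaustion $K = X_0 \subset X_1 \subset X_2 \subset \cdots$ of $X$ by compact subsets with $\bigcup_j X_j = X$ and $X_j$ contained in the relative interior of $X_{j+1}$. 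Writing $F_{j-1} := \psi_{j-1} \circ \cdots \circ \psi_1$, I build $\psi_j$ so that $\psi_j = \mathrm{id}$ on a neighborhood of $L_{j-1} := F_{j-1}(X_{j-1})$, $D\psi_j$ is $\C$-linear on $F_{j-1}(X_j)$, $L_j := \psi_j(F_{j-1}(X_j))$ is polynomially convex, and $\|\psi_j - \mathrm{id}\|_{C^k} < \varepsilon_j$ for a rapidly decreasing $\varepsilon_j$ chosen so that the infinite composition converges in $C^k$ and lies in $\mathcal{U}$. Taking each $\psi_j$ equal to the identity on a fixed open neighborhood of $K$ yields (i) and, in particular, the $\C$-linearity of $D\psi$ at points of $K$.

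The heart of the argument is a single-step perturbation lemma, which is a local variant of \cite[Theorem~1.4]{AW} with a fixed polynomially convex set: given a polynomially convex compact $L \subset \C^m$ and a small totally real $C^\infty$ manifold piece $E$ of real dimension $d<m$ attached to $L$ (say, $E$ compact with $\partial E \subset L$), there is a diffeomorphism $\psi$ of $\C^m$, arbitrarily close to $\mathrm{id}$ in $C^k$, equal to the identity on a neighborhood of $L$, with $D\psi$ $\C$-linear on $E$ and $\psi(L \cup E)$ polynomially convex. To prove this I would construct a non-negative $C^\infty$ function $\rho$ on a neighborhood of $L \cup \psi(E)$ whose zero set is exactly $L \cup \psi(E)$ and which is strictly plurisubharmonic off $L$: near $L$, $\rho$ is a smoothed supremum of squared moduli of polynomials that peak on $L$, provided by polynomial convexity; near a point of $E$, it is the squared distance to the totally real tangent plane in a holomorphic chart. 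The perturbation $\psi$ is chosen precisely to reconcile these two defining functions across the interface, by moving $E$ in a $\C$-normal direction, exploiting the dimensional deficit $d<m$.

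Once such a $\rho$ is in place, its small sublevel sets are Stein neighborhoods of $L \cup \psi(E)$, and combining this with Oka--Weil approximation on $L$ and a standard H\"ormander--Wermer-type polynomial-hull estimate for the totally real piece gives that $L \cup \psi(E)$ is polynomially convex. The $\C$-linearity of $D\psi$ on $E$ is enforced by taking $\psi - \mathrm{id}$ to be the real part of a local holomorphic vector field along $E$, which is possible because a totally real $C^\infty$ submanifold admits a local holomorphic complexification; any prescribed small perturbation of $E$ can thus be realized by a diffeomorphism whose differential on $E$ is $\C$-linear. The prescribed $C^k$-smallness of $\psi_j - \mathrm{id}$ is then arranged by a bump-function cut-off of this local vector field, using that all the chosen local corrections can be made arbitrarily small by restricting to a thin enough tubular neighborhood.

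The main obstacle I expect is precisely the single-step perturbation lemma, as it bundles several mutually constraining requirements: exact fixing of $L$ on an open neighborhood, $\C$-linearity of the differential on $E$, global polynomial convexity of $\psi(L \cup E)$, and arbitrarily small $C^k$-size. The most delicate part is gluing the local strictly plurisubharmonic defining functions across $\partial E$, where the polynomial-peaking function on the $L$-side and the quadratic totally-real function on the $E$-side must be reconciled; it is here that the hypothesis $d<m$ is essential, as it provides a positive-dimensional $\C$-normal direction into which one can push $E$ to absorb the mismatch while keeping $\psi$ small.
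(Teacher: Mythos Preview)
The paper does not actually prove Theorem~\ref{trocadero}: it states the result and defers the proof to the forthcoming paper \cite{ASKW}, noting only that it is ``a slight generalization of \cite[Theorem~1.4]{AW}.'' There is therefore no proof in the present paper against which to compare your proposal.

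That said, your outline is broadly consonant with the hints the paper provides. The title of \cite{ASKW}, ``Polynomial convexity of $\debar$-flat perturbations of totally real sets,'' indicates that the intended perturbations $\psi$ satisfy $\debar\psi=0$ along $X$ (which is exactly condition (ii): $D\psi_z$ $\C$-linear on $X$), and the architecture of \cite{AW} is an inductive absorption of pieces of a totally real set into a growing polynomially convex core, much as you describe. One point to correct: your phrasing that ``$\psi-\mathrm{id}$ is the real part of a local holomorphic vector field'' is not the right mechanism for (ii) --- taking a real part destroys $\C$-linearity of the differential. What you want is that $\psi-\mathrm{id}$ be $\debar$-flat along $E$; on a totally real submanifold this means $\psi$ agrees to first order with a holomorphic map, which one arranges by first prescribing the perturbation on $E$, extending it holomorphically off $E$ (possible precisely because $E$ is totally real), and then cutting off. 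Beyond that, your identification of the single-step lemma as the crux, and of the gluing of plurisubharmonic defining functions across $\partial E$ as the delicate point exploiting $d<m$, is consistent with how such arguments go; a complete proof would have to supply those details, and that is presumably the content of \cite{ASKW}.
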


We apply Theorem~\ref{trocadero} with $X=f(\text{CRsing}\, f)$ and $K=\emptyset$.
We get a diffeomorphism $\psi_1\colon\C^{n+1}\to\C^{n+1}$ such that 
$\psi_1\circ f(\text{CRsing}\, f)$ is polynomially convex.  Let $f_1=\psi_1\circ f$;
it is an embedding $M\to\mathbb{C}^{m+1}$ since $\psi_1$ is a diffeomorphism.
Notice that in addition to being polynomially convex, $f_1(\text{CRsing}\, f)$ is
totally real since  $\psi_1$ is a diffeomorphism satisfying (ii).
We claim that
\begin{equation}\label{midsummer}
\text{CRsing}\, f_1 = \text{CRsing}\, f
\end{equation}
if $\psi_1$ is sufficiently $C^2$-close to $\text{id}_{\C^{n+1}}$.
Since $D\psi_1$ is $\C$-linear on $f(\text{CRsing}\, f)$ it follows that
$\text{CRsing}\, f_1 \supset \text{CRsing}\, f$. The inclusion is open since
these manifolds have the same dimension. Since  $\text{CRsing}\, f$ is closed thus 
$\text{CRsing}\, f$ 
is a union of connected components of $\text{CRsing}\, f_1$. The claim thus follows since
 $\text{CRsing}\, f_1$ and $\text{CRsing}\, f$
are diffeomorphic by Theorem~\ref{sol}.
Now since $f_1(\text{CRsing}\, f)$ is polynomially convex and totally real,
by \eqref{midsummer} also $f_1(\text{CRsing}\, f_1)$ is. Hence,
$f_1$ satisfies (b) of Theorem~\ref{main1}.


We now apply Theorem~\ref{trocadero} with $X=f_1(M)$ and 
$K=f_1(\text{CRsing}\, f_1)$. We get a diffeomorphism 
$\psi_2\colon\C^{n+1}\to\C^{n+1}$ such that, letting $f_2=\psi_2\circ f_1$, 
$f_2(M)$ is 
polynomially convex and 
$f_2(\text{CRsing}\, f_1)=f_1(\text{CRsing}\, f_1)$. 
Since $\psi_2$ is a diffeomorphism satisfying (ii) on $f_1(M)$ 
it follows that $f_2(\text{CRsing}\, f_2)=f_1(\text{CRsing}\, f_1)$.
Thus, $f_2\colon M\to\C^{n+1}$ is an embedding
satisfying (a) and (b) of Theorem~\ref{main1}, and so Theorem~\ref{main1} is proved.

\section{Proof of Theorem~\ref{main3}}\label{extra}
Let now $\Sb_j\subset G_\R(n,\C^{m})$ be the set of 
real $n$-planes in $\C^{m}$ containing a complex $j$-plane but no complex
$j+1$-plane. As in the proof of Proposition~\ref{antarctica} one can show that
$\Sb:=\cup_j\Sb_j$ is a real-analytic closed connected subset of $G_\R(n,\C^{m})$, 
$\Sb_j$ is a submanifold of $G_\R(n,\C^{m})$ of codimension $2j(m-n+j)$, and 
$\overline{\Sb}_j\setminus \Sb_j=\cup_{k>j}\Sb_k$.

Let $\pi_1$ be the mapping  \eqref{pinatrium} with $\C^{n+1}$ replaced by $\C^m$.
It is shown to be a  submersion as in the proof of Lemma~\ref{blaa}. Thus
$\Sb'_j:=\pi_1^{-1}(\Sb_j)$ is a submanifold of $J^1(M,\C^m)$ of codimension
$2j(m-n+j)$. It follows that if $n\geq 12$ and $m=\lfloor 5n/4\rfloor -1$, then
$$
n<\text{codim}\, \Sb'_j,\qquad j\geq 2.
$$
If $f\colon M\to\C^m$ thus $j^1f(M)\cap \Sb'_j=\emptyset$, $j\geq 2$, possibly after 
an arbitrarily small perturbation of $f$. Hence,
$$
\text{CRsing}\, f=(j^1f)^{-1} (\cup_j\Sb'_j)=(j^1f)^{-1}(\Sb'_1)
$$
is either empty or a submanifold of $M$ of codimension $2(m-n+1)$, possibly after 
an arbitrarily small perturbation of $f$.
Notice that if $n-2(m-n+1)<2$, then $\text{dim}\, \text{CRsing}\, f<2$
and thus $f(\text{CRsing}\, f)$ is totally real. 
In what follows we tacitly assume that $n-2(m-n+1)\geq 2$.

\smallskip

Let $F_\Sb\to\Sb_1$ be the fiber bundle whose fiber over $P\in\Sb_1$
is $G_\R(n-2(m-n+1),P)$ and let $\Sigma\subset F_\Sb$ be the set of flags
$(P,\sigma)\in F_\Sb$ such that the subspace $\sigma$ of $P$ contains a complex line.
As in Lemma~\ref{spring} one shows that $\Sigma$ is a closed submanifold of
$F_\Sb$ of codimension $4(m-n+1)$. Let us do the calculation of the codimension in
some detail. If  $P\in\Sb_1$, then $P=\ell\oplus \Pi$, where $\ell$ is a complex line
in $\C^m$ and $\Pi$ is a real $n-2$-plane in $\ell^\perp\simeq \C^{m-1}$. A flag
$(P,\sigma)$ is in $\Sigma$ if and only if $\sigma=\ell\oplus\pi$, where $\pi$
is a real linear subspace of $\Pi$ of dimension $n-2(m-n+1)-2$. Hence,
$$
\text{dim}\, \Sigma=\text{dim}\, \Sb_1+\text{dim}\, G_\R(n-2(m-n+1)-2,\Pi).
$$
A simple calculation using \eqref{halt} then shows that 
$\text{codim}\, \Sigma=4(m-n+1)$.

Let $\Sb''_1\subset J^2(M,\C^m)$ be the inverse image of $\Sb'_1$ under the natural submersion $J^2(M,\C^m)\to J^1(M,\C^m)$. We define $\pi_2$ as in 
\eqref{sno} and let $\Sigma''=\pi_2^{-1}\Sigma$. As in the proof of 
Proposition~\ref{sommarlov} one shows that if $f\colon M\to\C^m$ is an embedding
such that \eqref{jumpyard} holds,
then $f(\text{CRsing}\, f)$ it totally real if and only if
$j^2f(M)\cap\Sigma''=\emptyset$. The mapping $\pi_2$ is shown to be a submersion
as in the proof of Proposition~\ref{submersion2}. Thus $\Sigma''$ is a submanifold
of $J^2(M,\C^m)$ and
\begin{eqnarray*}
\text{codim}\,\Sigma'' &=&
\text{codim}\, \Sb''_1+\text{codim}_{\Sb''_1}\Sigma''
=
\text{codim}\, \Sb_1+\text{codim}\,\Sigma \\
&=&
2(m-n+1)+4(m-n+1)
=
6(m-n+1).
\end{eqnarray*}
If $n\geq 12$ and $m=\lfloor 5n/4\rfloor-1$, then $n<6(m-n+1)$.
It thus follows that $j^2f(M)\cap\Sigma''=\emptyset$, possibly after a perturbation of
the embedding $f\colon M\to\C^m$, as long as $n\geq 12$ and $m=\lfloor 5n/4\rfloor-1$.
In the same way as Theorem~\ref{sol} is proved one now obtains

\medskip

\noindent {\bf Theorem~\ref{sol}'.}
\emph{Let $M$ be a compact smooth real $n$-dimensional manifold with $n\geq 12$,
let $m=\lfloor 5n/4\rfloor-1$,
and let $f_0\colon M\to \C^{m}$ be an embedding. In each neighborhood of $f_0$
in $C^\infty(M,\C^{m})$ there is an embedding 
$f\colon M\to \C^{m}$ such that 
$\text{CRsing}\, f$ is either empty or a smooth closed submanifold of $M$ of codimension
$2(m-n+1)$ and $f(\text{CRsing}\, f)$ is totally real. 
Moreover, if $\tilde f$ is in a sufficiently small $C^2$-neighborhood of $f$ in 
$C^\infty(M,\mathbb{C}^{m})$,
then $\tilde f$ is an embedding, $\text{CRsing}\, \tilde f$ and $\text{CRsing}\, f$ are diffeomorphic, and 
$\tilde f(\text{CRsing}\, \tilde f)$ is totally real.}

\medskip

Using this and Theorem~\ref{trocadero} one concludes Theorem~\ref{main3}
as in Section~\ref{pfthm}.

\end{document}